\DeclareMathAlphabet{\mathmybb}{U}{bbold}{m}{n}
\begin{document}

\newtheorem{thm}{Theorem}
\newtheorem{lem}[thm]{Lemma}
\newtheorem{claim}[thm]{Claim}
\newtheorem{cor}[thm]{Corollary}
\newtheorem{prop}[thm]{Proposition} 
\newtheorem{definition}[thm]{Definition}
\newtheorem{rem}[thm]{Remark} 
\newtheorem{question}[thm]{Question}
\newtheorem{conj}[thm]{Conjecture}
\newtheorem{prob}{Problem}
\newtheorem{Process}[thm]{Process}
\newtheorem{Computation}[thm]{Computation}
\newtheorem{Fact}[thm]{Fact}
\newtheorem{Observation}[thm]{Observation}

\newtheorem{lemma}[thm]{Lemma}

\newcommand{\GL}{\operatorname{GL}}
\newcommand{\SL}{\operatorname{SL}}
\newcommand{\lcm}{\operatorname{lcm}}
\newcommand{\ord}{\operatorname{ord}}
\newcommand{\Op}{\operatorname{Op}}
\newcommand{\Tr}{\operatorname{Tr}}
\newcommand{\Nm}{\operatorname{Nm}}
\newcommand{\BigSquare}[1]{\raisebox{-0.5ex}{\scalebox{2}{$\square$}}_{#1}}
\newcommand{\legendre}[2]{\left(\frac{#1}{#2}\right)}

\numberwithin{equation}{section}
\numberwithin{thm}{section}
\numberwithin{table}{section}

\numberwithin{figure}{section}

\def\sssum{\mathop{\sum\!\sum\!\sum}}
\def\ssum{\mathop{\sum\ldots \sum}}
\def\iint{\mathop{\int\ldots \int}}

\def\wt {\mathrm{wt}}
\def\Tr {\mathrm{Tr}}

\def\SrA{\cS_r\(\cA\)}

\def\vol {{\mathrm{vol\,}}}
\def\squareforqed{\hbox{\rlap{$\sqcap$}$\sqcup$}}
\def\qed{\ifmmode\squareforqed\else{\unskip\nobreak\hfil
\penalty50\hskip1em\null\nobreak\hfil\squareforqed
\parfillskip=0pt\finalhyphendemerits=0\endgraf}\fi}

\def \ss{\mathsf{s}} 

\def \balpha{\bm{\alpha}}
\def \bbeta{\bm{\beta}}
\def \bgamma{\bm{\gamma}}
\def \blambda{\bm{\lambda}}
\def \bchi{\bm{\chi}}
\def \bphi{\bm{\varphi}}
\def \bpsi{\bm{\psi}}
\def \bomega{\bm{\omega}}
\def \btheta{\bm{\vartheta}}

\newcommand{\bfxi}{{\boldsymbol{\xi}}}
\newcommand{\bfrho}{{\boldsymbol{\rho}}}

 \def \xbar{\overline x}
  \def \ybar{\overline y}

\def\cA{{\mathcal A}}
\def\cB{{\mathcal B}}
\def\cC{{\mathcal C}}
\def\cD{{\mathcal D}}
\def\cE{{\mathcal E}}
\def\cF{{\mathcal F}}
\def\cG{{\mathcal G}}
\def\cH{{\mathcal H}}
\def\cI{{\mathcal I}}
\def\cJ{{\mathcal J}}
\def\cK{{\mathcal K}}
\def\cL{{\mathcal L}}
\def\cM{{\mathcal M}}
\def\cN{{\mathcal N}}
\def\cO{{\mathcal O}}
\def\cP{{\mathcal P}}
\def\cQ{{\mathcal Q}}
\def\cR{{\mathcal R}}
\def\cS{{\mathcal S}}
\def\cT{{\mathcal T}}
\def\cU{{\mathcal U}}
\def\cV{{\mathcal V}}
\def\cW{{\mathcal W}}
\def\cX{{\mathcal X}}
\def\cY{{\mathcal Y}}
\def\cZ{{\mathcal Z}}
\def\Ker{{\mathrm{Ker}}}

\def\NmQR{N(m;Q,R)}
\def\VmQR{\cV(m;Q,R)}

\def\Xm{\cX_{p,m}}

\def \A {{\mathbb A}}
\def \B {{\mathbb A}}
\def \C {{\mathbb C}}
\def \F {{\mathbb F}}
\def \G {{\mathbb G}}
\def \L {{\mathbb L}}
\def \K {{\mathbb K}}
\def \N {{\mathbb N}}
\def \PP {{\mathbb P}}
\def \Q {{\mathbb Q}}
\def \R {{\mathbb R}}
\def \Z {{\mathbb Z}}
\def \fS{\mathfrak S}
\def \fB{\mathfrak B}

\def\Fq{\F_q}
\def\Fqr{\F_{q^r}} 
\def\ovFq{\overline{\F_q}}
\def\ovFp{\overline{\F_p}}
\def\GL{\operatorname{GL}}
\def\SL{\operatorname{SL}}
\def\PGL{\operatorname{PGL}}
\def\PSL{\operatorname{PSL}}
\def\li{\operatorname{li}}
\def\sym{\operatorname{sym}}

\def\Mob{M{\"o}bius }

\def\fF{\EuScript{F}}
\def\M{\mathsf {M}}
\def\T{\mathsf {T}}

\def\e{{\mathbf{\,e}}}
\def\ep{{\mathbf{\,e}}_p}
\def\eq{{\mathbf{\,e}}_q}

\def\\{\cr}
\def\({\left(}
\def\){\right)}

\def\<{\left(\!\!\left(}
\def\>{\right)\!\!\right)}
\def\fl#1{\left\lfloor#1\right\rfloor}
\def\rf#1{\left\lceil#1\right\rceil}

\def\Tr{{\mathrm{Tr}}}
\def\Nm{{\mathrm{Nm}}}
\def\Im{{\mathrm{Im}}}

\def \oF {\overline \F}

\newcommand{\pfrac}[2]{{\left(\frac{#1}{#2}\right)}}

\def \Prob{{\mathrm {}}}
\def\e{\mathbf{e}}
\def\ep{{\mathbf{\,e}}_p}
\def\epp{{\mathbf{\,e}}_{p^2}}
\def\em{{\mathbf{\,e}}_m}

\def\Res{\mathrm{Res}}
\def\Orb{\mathrm{Orb}}

\def\vec#1{\mathbf{#1}}
\def \va{\vec{a}}
\def \vb{\vec{b}}
\def \vh{\vec{h}}
\def \vk{\vec{k}}
\def \vs{\vec{s}}
\def \vu{\vec{u}}
\def \vv{\vec{v}}
\def \vz{\vec{z}}
\def\flp#1{{\left\langle#1\right\rangle}_p}
\def\T {\mathsf {T}}

\def\sfG {\mathsf {G}}
\def\sfK {\mathsf {K}}

\def\mand{\qquad\mbox{and}\qquad}

\title[Cubic polynomials and sums of two squares]
{Cubic polynomials and sums of two squares}

\author[Siddharth Iyer] {Siddharth Iyer}
\address{School of Mathematics and Statistics, University of New South Wales, Sydney, NSW 2052, Australia}
\email{siddharth.iyer@unsw.edu.au}

\begin{abstract}
We establish a lower bound for the frequency with which an irreducible monic cubic polynomial with negative discriminant can be expressed as a sum of two squares ($\BigSquare{2}$). This provides a quantitative answer to a question posed by Grechuk (2021) concerning the infinitude of such values. Our proof relies on a two-dimensional unit argument and the arithmetic of degree six number fields. For example, we show that if $h \equiv 2 \pmod{4}$, then
\begin{align*}
\# \{n : n^3+h \in \BigSquare{2}, \ 1 \leq n \leq x \} \gg x^{1/3-o(1)}.
\end{align*}
These arguments may be generalised to study the representation of irreducible monic cubic polynomials by the quadratic form $x^2+ny^2$, where $n \in \mathbb{N}$.
\end{abstract}

\keywords{Sums of two squares, Cubic Polynomials, Ch\^{a}telet surface, Diophantine Equation, Units}
\subjclass[2020]{11D25, 11D45, 11R09, 11R16, 11R27}

\maketitle

\tableofcontents

\section{Introduction}
In this paper, we aim to study integer points on the surface $y^2+z^2 = \mathbf{p}(x)$, where $\mathbf{p}(x)$ denotes an irreducible monic polynomial in $\Z[x]$ of degree-three.
Let $B_{\mathbf{p}}(X)$ be the set of integers $n$ with $n \leq X$ such that $\mathbf{p}(n)$ is a sum of two squares ($\mathbf{p}(n)\in \BigSquare{2}$). We prove the following theorem.
\begin{thm}
\label{cubictheorem}
If $\mathbf{p}(x) = x^3+a_{2}x^2+a_{1}x+a_{0} \in \Z[x]$ is an irreducible cubic polynomial which satisfies either condition 1) or condition 2) where
\begin{enumerate}
\item $a_{2}^2-a_{1}$ is even, and $a_{1}a_{2}-a_{0}$ is odd
\item $a_{0} \equiv 2 \mod 4$, and $a_{1},a_{2} \equiv 0 \mod 4$,
\end{enumerate}
and $\mathbf{p}$ has one real root and two complex roots then
\begin{align*}
\#B_{\mathbf{p}}(X) \gg X^{1/3 - o(1)}.
\end{align*}
\end{thm}
Grechuk \cite{Grechuk}, \cite[p. 406]{Grechbook} asked if there are infinitely many integers $n$ such that $n^3-2$ is a sum of two squares. Here we answer his question in a quantitative manner, showing that $B_{x^3-2}(X)\gg X^{1/3-o(1)}$. He also asked if there are non-trivial polynomial substitutions $R(t), S_{1}(t)$ and $S_{2}(t)$ such that $R(t)^3-2 = S_{1}(t)^2+S_{2}(t)^2$. Such a substitution is indeed possible if we take
{\tiny
\begin{align}
\label{Substitutionforncubedmin2}
R(t) &= 1000 t^6 + 1800 t^5 + 1380 t^4 - 24 t^3 - 402 t^2 - 130 t + 79, \\
S_{1}(t) &= -18000t^9 - 56400t^8 - 77040t^7 - 44488t^6 + 3312t^5 + 18312t^4 + 5264t^3 - 2466t^2 - 1287t + 346, \notag \\
S_{2}(t) &= 26000t^9 + 64800t^8 + 71280t^7 + 21816t^6 - 20784t^5 - 21504t^4 - 1360t^3 + 4146t^2 + 1263t - 611. \notag
\end{align}
}
We remark that the above equations were derived using system $\mathbf{T}$, described in Section \ref{Substitutioninspirationcubicpol}, and we will briefly explain how they were gathered in Section \ref{Remarkcubicpolys}. Note that the arguments presented later for the proof of Theorem \ref{cubictheorem} cannot be straightforwardly generalised to the case of irreducible cubic polynomials with three distinct real roots.

Our result builds upon the work of Friedlander and Iwaniec regarding values of quadratic polynomials expressible as values of a quadratic form \cite{Friedlander}. They prove that if $g(n) = an^2+bn+c$ is a polynomial with integer coefficients, and $\phi$ is a binary quadratic form, such that $g(n)$ and $\phi$ are ``compatible,'' then the number of $n \leq X$ such that $g(n)$ is represented by $\phi$ is $\gg \frac{X}{(\log X)^{1/2}}$ \cite[Theorem 2]{Friedlander}. We further remark that their result does not apply when $g$ is a reducible polynomial: if we take $\phi(y,z) = y^2+z^2$ and $g(n) = n(n+1)$, the number of $n \leq X$ such that $g(n)$ is a sum of two squares is $\ll \frac{X}{\log(X)}$ (this result is derived by applying the Brun or Selberg sieve, and has been previously stated by Hooley \cite[p. 208]{Hooley}).

Because $4n^3+3$ cannot be expressed as a sum of two squares, extending Theorem \ref{cubictheorem} to non-monic cubic polynomials demands further arithmetic consideration. Finally, we remark that we do not investigate the frequency of representation of cubic polynomials by other quadratic forms; our lower bound arguments should generalize to all quadratic forms of negative discriminant and monic polynomials of negative discriminant, but not for positive discriminants.

This paper is also related to a result of Iwaniec and Munshi \cite{Munshi} regarding densities of rational points on the surface $\phi(y,z) = \mathbf{p}(x)$, where $\phi$ is an irreducible quadratic form with rational coefficients, and $\mathbf{p}(x)$ is an irreducible cubic polynomial in $\Q[x]$. We note additional related work regarding the study of Ch\^{a}telet surfaces \cite{Breteche}, \cite{Irving}, \cite{Woopaper}.

We remark that the bound in Theorem \ref{cubictheorem} is not always optimal. If we can find an integer-valued quadratic polynomial $q_{0}$, and polynomials $q_{1},q_{2} \in \Q[x]$ such that $\mathbf{p}(q_{0}(x)) = q_{1}(x)^2 +q_{2}(x)^2$, then $\#B_{\mathbf{p}}(X) \gg ~ X^{1/2}$. Through trial and error, we obtain the identity
\begin{align*}
(x^2+8)^3+17 = (x^3+10x)^2+(2x^2+23)^2,
\end{align*}
so that $\#B_{x^3+17}(X) \gg X^{1/2}$. Also, when $n$ is an integer, we trivially have $\#B_{x^3+n^2}(X) \gg X^{1/2}$, as for any integer $m$, the quantity $m^6+n^2$ is a sum of two squares. In general, we have not been able to find such polynomial substitutions for an arbitrary monic polynomial in $\Z[x]$ of degree-three. We pose the following question. 
\begin{question}
Is it true that for any irreducible monic polynomial in $\Z[x]$ of degree-three, say $\mathbf{p}(x)$, there exists an integer-valued quadratic polynomial $q_{0}(x)$, and polynomials $q_{1}(x),q_{2}(x) \in \Q[x]$ such that $\mathbf{p}(q_{0}(x)) = q_{1}(x)^2 +q_{2}(x)^2$?
\end{question}
Related to this conjecture, a result by Schinzel in \cite[Lemma 10]{Schinzel} demonstrates that for any such polynomial $\mathbf{p}(x)$, there exists a quadratic polynomial $\mathbf{h}(x) \in \Z[x]$ and a cubic polynomial $\mathbf{g}(x) \in \Z[x]$ such that $\mathbf{g}(x)$ divides $\mathbf{p}(\mathbf{h}(x))$. Perhaps applying the techniques used in \cite{Schinzel} to the polynomial ring $\Z[i][x]$ may settle the conjecture.

At a high level, the proof of Theorem \ref{cubictheorem} hinges on constructing a ``sufficiently dense'' family of degree-six integer-valued polynomials $R(\cdot)$ with sparsely intersecting value sets, such that $\mathbf{p}(R(t))$ is a sum of two squares for every integer $t$.

Let $\mathbf{p}(x)$ be an irreducible polynomial in $\Z[x]$ with $\mathbf{p}(\theta) = 0$ for some $\theta \in \C$. To study when $\mathbf{p}(n) \in \BigSquare{2}$, we investigate whether $q(n-\theta)$ is a sum of two squares in the ring $\Z[\theta]$ when $q \in \BigSquare{2}\setminus\{0\}$. Define
\begin{align*}
\BigSquare{2}(\theta) = \{w_{1}^2+w_{2}^2: \ w_{i}\in \Z[\theta]\}.
\end{align*}
We have the following lemma:
\begin{lem}
\label{Ringtransferprinclemsum2squares}
If an integer $z \in \Z$ satisfies 
\begin{align*}
q(z-\theta) \in \BigSquare{2}(\theta)
\end{align*}
for some $q \in \BigSquare{2}\setminus\{0\}$, then $\mathbf{p}(z) \in \BigSquare{2}$.
\end{lem}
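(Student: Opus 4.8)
The plan is to exploit the norm map from $\Z[\theta]$ down to $\Z$, using the fact that $\theta$ is a root of $\mathbf{p}$. Suppose $q^2(z-\theta) = w_1^2 + w_2^2$ with $w_1, w_2 \in \Z[\theta]$. The key observation is that $w_1^2 + w_2^2 = (w_1 + i w_2)(w_1 - i w_2)$ factors in the larger ring $\Z[\theta][i] = \Z[i][\theta]$, and the two factors are complex conjugates of one another under the automorphism $i \mapsto -i$ (which fixes $\Z[\theta]$). So I would pass to $R = \Z[i][\theta]$ and write $q^2(z-\theta) = \alpha \bar\alpha$ where $\alpha = w_1 + i w_2 \in R$ and $\bar\alpha$ is its image under complex conjugation.

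Next I would apply the norm $N$ from $R$ (or from the relevant number field $\Q(i,\theta)$, taken relative to $\Q(i)$, composed with the norm from $\Q(i)$ to $\Q$, or more simply the full field norm to $\Q$) to both sides. Since $\mathbf{p}$ is monic of degree $3$ with root $\theta$, one has $N_{\Q(\theta)/\Q}(z - \theta) = \mathbf{p}(z)$ up to sign; tracking the exact sign via the factorization $\mathbf{p}(x) = \prod (x - \theta^{(j)})$ over the conjugates $\theta^{(j)}$ gives $\prod_j (z - \theta^{(j)}) = \mathbf{p}(z)$ exactly for monic $\mathbf{p}$. Taking norms of $q^2(z-\theta) = \alpha\bar\alpha$ down to $\Q$ then yields, for a suitable normalization, an equation of the shape $q^{2d} \,\mathbf{p}(z)^{c} = N(\alpha) \cdot N(\bar\alpha) = N(\alpha)^2$ for appropriate positive integers $c, d$ (powers recording the field degrees), where $N(\alpha) \in \Z$ because $\alpha \in R$ is an algebraic integer (assuming $\mathbf{p}$ monic, so $\Z[\theta]$ consists of algebraic integers) and its norm is conjugation-stable, hence rational, hence a rational integer.

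Finally I would extract the arithmetic conclusion: the displayed equation shows $\mathbf{p}(z)^c$ is (up to the fixed factor $q^{2d}$, a perfect square) equal to a perfect square in $\Z$, and more precisely that $\mathbf{p}(z)$ itself is a sum of two squares. The cleanest route is the classical local criterion: $\mathbf{p}(z) \in \BigSquare{2}$ over $\Z$ if and only if every prime $\ell \equiv 3 \pmod 4$ divides $\mathbf{p}(z)$ to an even power. I would show that the factorization $q^2(z-\theta) = \alpha\bar\alpha$ in $R$, pushed down through the norm and tracked prime-by-prime, forces exactly this parity condition on $\mathbf{p}(z)$ — the point being that a rational prime $\ell \equiv 3 \pmod 4$ is inert in $\Z[i]$, so it cannot split off asymmetrically between $\alpha$ and $\bar\alpha$, and the $q^2$ contributes only even powers. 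Thus $\mathbf{p}(z)$ has the required shape and $\mathbf{p}(z) \in \BigSquare{2}$.

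The main obstacle I anticipate is bookkeeping the ramification and the exact exponents when passing between the rings $\Z[\theta]$, $\Z[i][\theta]$, and $\Z$, together with the contribution of the factor $q$ and of primes dividing the index $[\mathcal{O}_{\Q(\theta)} : \Z[\theta]]$ or the discriminant; one must make sure the "odd-power primes $\equiv 3 \bmod 4$" obstruction genuinely cancels rather than merely doubling in a way that does not directly give $\mathbf{p}(z)$ as a sum of two squares. Handling the prime $2$ and the primes dividing $q$ separately, and invoking the sum-of-two-squares criterion only after clearing these, should resolve it.
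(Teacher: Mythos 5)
Your strategy --- factor $q^2(z-\theta)=(w_1+iw_2)(w_1-iw_2)$ in $\Z[i][\theta]$ and push the factorization down by a norm --- is the same as the paper's, but the step you actually display carries no information, and the step that would carry the information is only sketched. If $N$ denotes the full norm from $\Q(i,\theta)$ to $\Q$, then indeed $N(\bar\alpha)=N(\alpha)$, but the resulting identity is $q^{12}\,\mathbf{p}(z)^2=N(\alpha)^2$: a perfect square equals a perfect square, true for trivial reasons and implying nothing about $\mathbf{p}(z)\in\BigSquare{2}$ (your exponent $c$ is even, so ``$\mathbf{p}(z)^c$ is a square up to the factor $q^{2d}$'' holds for every integer $z$ whatsoever). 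If instead $N$ is the norm relative to $\Q(i)$, then $N(\bar\alpha)=\overline{N(\alpha)}$, not $N(\alpha)$, and this is exactly the version you need: writing $P=N_{\Q(i,\theta)/\Q(i)}(w_1+iw_2)=\prod_{j}\bigl(w_1(\theta_j)+iw_2(\theta_j)\bigr)$, where $w_1,w_2$ are viewed as integer polynomials in $\theta$ and $\theta_1,\theta_2,\theta_3$ are the roots of $\mathbf{p}$, the product is a symmetric function of the roots with coefficients in $\Z[i]$, hence $P\in\Z[i]$; since the $w_i$ have real coefficients and the root set of $\mathbf{p}$ is stable under complex conjugation, the conjugate factor has relative norm $\overline{P}$, whence $q^6\,\mathbf{p}(z)=P\overline{P}=u^2+v^2$ with $u,v\in\Z$. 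This is precisely the paper's computation of $P_1P_2$ (the paper divides by $q^2$ at the outset, so it obtains $\mathbf{p}(z)$ itself as a sum of two rational squares and quotes the fact that an integer which is a sum of two rational squares is a sum of two integer squares).

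Once $q^6\,\mathbf{p}(z)=u^2+v^2$ is in hand, the prime-by-prime analysis you sketch inside $R=\Z[i][\theta]$ is unnecessary, and as envisaged it is harder than you allow: $R$ is in general a non-maximal order, so ideals need not factor uniquely there, which is exactly why you find yourself worrying about the index $[\mathcal{O}_{\Q(\theta)}:\Z[\theta]]$, the discriminant, the prime $2$ and the primes dividing $q$. None of that enters: $\mathbf{p}(z)\neq 0$ because $\mathbf{p}$ is irreducible of degree $3$, and for every prime $\ell\equiv 3\pmod 4$ the valuation $v_\ell(u^2+v^2)$ is even, while $v_\ell(q^6)$ is even, so $v_\ell(\mathbf{p}(z))$ is even and the classical two-square criterion gives $\mathbf{p}(z)\in\BigSquare{2}$. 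So the underlying idea is right and essentially the paper's, but as written there is a genuine gap: the displayed norm identity proves nothing, and the local argument intended to supply the real content is left unverified (you yourself flag the cancellation as unresolved); the relative-norm identity above is the missing piece that closes it.
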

Within the ring $\Z[\theta]$, we define a notion of maximum coefficient, where $M_{\theta}(c_{0}+c_{1}\theta+c_{2}\theta^2) = \max\{|c_{0}|,|c_{1}|,|c_{2}|\}$. For $Y,Y_{1},Y_{2} \geq 0$, define
\begin{align*}
\BigSquare{2}(\theta;Y) :=\{w_{1}^2+w_{2}^2: \ w_{i}\in \Z[\theta], \ M_{\theta}(w_{i}) \leq Y\},
\end{align*}
and for $q \in \BigSquare{2}\setminus\{0\}$ ($q$ will be a parameter in $\BigSquare{2}\setminus\{0\}$ for the rest of this paper) set
\begin{align*}
\mathcal{E}_{\theta,q}(Y_{1},Y_{2}) := \{n: \ n\in \Z, \ q(n-\theta) \in \BigSquare{2}(\theta;Y_{1}), \ |n| \leq Y_{2}\}.
\end{align*}
With Lemma \ref{Ringtransferprinclemsum2squares} we have
\begin{align}
\label{Tacticalringinequalitysum2squares}
\#B_{\mathbf{p}}(X) \geq \# \mathcal{E}_{\theta,q}(Y,X)
\end{align}
for all $X,Y \geq 0$ and $q \in \BigSquare{2}$. Let
\begin{align*}
\mathcal{R}_{\theta,q}(Y_{1},Y_{2}) := \left\{(\omega_{1},\omega_{2}): \  
\begin{aligned} 
&\omega_{i}\in \Z[\theta], \ M_{\theta}(\omega_{i}) \leq Y_{1}, \\ 
&\exists n \in \Z, \ |n| \leq Y_{2}, \ \omega_{1}^2+\omega_{2}^2 = q(n-\theta)
\end{aligned}\right\}.
\end{align*}
We relate the sets $\mathcal{R}_{\theta,q}(Y_{1},Y_{2})$ and $\mathcal{E}_{\theta,q}(Y_{1},Y_{2})$, provided $\theta$ is ``3-valid", that is, if $\theta$ is a real algebraic integer of degree-three and has two complex conjugates.
\begin{lem}
\label{SmallCollisionlemmasum2squares}
If $\theta$ is 3-valid, $\varepsilon>0$ and $Y_{1},Y_{2} \geq 2$ we have
\begin{align*}
\#\mathcal{E}_{\theta,q}(Y_{1},Y_{2})\gg \#\mathcal{R}_{\theta,q}(Y_{1},Y_{2})\cdot Y_{2}^{-\varepsilon}\cdot \log^{-2}Y_{1},
\end{align*}
where the implied constant depends only on $q,\varepsilon$ and $\theta$.
\end{lem}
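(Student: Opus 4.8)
The plan is to prove the inequality by a counting argument: the set $\mathcal{E}_{\theta,q}(Y_1,Y_2)$ records integers $n$, while $\mathcal{R}_{\theta,q}(Y_1,Y_2)$ records pairs $(\omega_1,\omega_2)$ that witness membership in $\mathcal{E}_{\theta,q}$, so the task is to bound how many distinct witnessing pairs can map to a single integer $n$, i.e. to control the fibers of the map $(\omega_1,\omega_2)\mapsto n$. A pair in $\mathcal{R}_{\theta,q}(Y_1,Y_2)$ determines $n$ uniquely (since $\omega_1^2+\omega_2^2 = q^2(n-\theta)$ has $n = \theta + (\omega_1^2+\omega_2^2)/q^2$ forced), so the map is well-defined, and $\#\mathcal{E}_{\theta,q}(Y_1,Y_2)$ equals the number of $n$ in the image. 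Thus it suffices to show that for each fixed $n$, the number of pairs $(\omega_1,\omega_2)\in\Z[\theta]^2$ with $M_\theta(\omega_i)\le Y_1$ and $\omega_1^2+\omega_2^2 = q^2(n-\theta)$ is $O_{q,\varepsilon,\theta}(Y_2^{\varepsilon}\log^2 Y_1)$.

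First I would factor in the Gaussian-integer extension: write $K = \Q(\theta)$, a real cubic field, and work in $R = \Z[\theta][i]$ (or its integral closure), where $\omega_1^2+\omega_2^2 = (\omega_1+i\omega_2)(\omega_1-i\omega_2)$. So a witnessing pair corresponds to a factorization $q^2(n-\theta) = \alpha\bar\alpha$ with $\alpha = \omega_1+i\omega_2$ and $\bar\alpha$ its conjugate under $i\mapsto -i$. The number of such factorizations, up to units, is controlled by the number of divisors of the ideal $(q^2(n-\theta))$ in the ring of integers of $K(i)$, which is a degree-six number field. A standard divisor-bound estimate gives that the number of ideal divisors of a nonzero element of norm $N$ is $N^{o(1)}$; here the norm of $q^2(n-\theta)$ is polynomial in $n$ and $q$, hence polynomial in $Y_2$ (as $|n|\le Y_2$), so the number of divisors is $O_{q,\varepsilon,\theta}(Y_2^{\varepsilon})$. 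This accounts for the $Y_2^{-\varepsilon}$ factor.

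The remaining issue is that each ideal factorization corresponds to \emph{infinitely many} element factorizations, differing by units of $K(i)$; here the constraint $M_\theta(\omega_i)\le Y_1$ (equivalently a height/size bound on $\alpha$) must cut the unit group down to a bounded-in-a-log-sense set. This is where the "two-dimensional unit argument" alluded to in the abstract enters: by Dirichlet's unit theorem the unit group of $K(i)$ (a field with, say, $r_1$ real and $r_2$ complex places) has rank $r_1+r_2-1$; but the subgroup of units $\eta$ for which $\eta$ and its $i$-conjugate $\bar\eta$ satisfy a compatible size bound — because $\alpha\bar\alpha$ is fixed, so $|\alpha|$ at each archimedean place is essentially determined — is constrained to lie in a box in the logarithmic embedding whose volume is $O(\log^{\dim} Y_1)$. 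I expect the main obstacle, and the technical heart of the lemma, is precisely this step: showing that the logarithmic images of the admissible units fill out a region of dimension exactly $2$ (hence the $\log^2 Y_1$, not a higher power), which requires carefully identifying which archimedean places of $K(i)$ are "free" after imposing $\alpha\bar\alpha = q^2(n-\theta)$ and the reality of $\theta$. Concretely: the places come in conjugate pairs under $i\mapsto -i$, the product $|\alpha|_v |\bar\alpha|_v$ is pinned at each place, so only the \emph{ratios} $|\alpha|_v/|\bar\alpha|_v$ are free, and the number of independent such ratios, together with the size bound $M_\theta(\omega_i)\le Y_1$ translating to $|\alpha|_v \ll Y_1$ at each $v$, confines $\alpha$ (up to finitely many associates) to $O(\log^2 Y_1)$ possibilities. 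Assembling the divisor bound with the unit count yields the claimed estimate; one should also keep track that the dependence of all implied constants on $q,\varepsilon,\theta$ (through discriminants, regulators, and the finite unit-torsion) is harmless.
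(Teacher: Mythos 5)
Your outline matches the paper's proof: the same fiber-counting reduction (the paper's Lemma~\ref{Ethethaqlowerbound1}), the same divisor bound for the ideal generated by $q^2(n-\theta)$ in the degree-six field $\Q[\theta,i]$ supplying the $Y_2^{\varepsilon}$ factor, and the same rank-two unit count supplying the $\log^2 Y_1$ factor. The only difference is cosmetic, in how the unit step is executed: you count points of the rank-$2$ unit lattice in a logarithmic box of side $O(\log Y_1)$, while the paper writes units as $w\,l_1^{t_1}l_2^{t_2}$ with $|l_1|\neq 1$, $|l_2|=1$ and uses Kronecker's theorem at a conjugate embedding to bound the $l_2$-direction — both give $O(\log^2 Y_1)$ uniformly in the base element.
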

Lower bounds for $\#\mathcal{R}_{\theta,q}(\cdot,\cdot),$ alongside Lemma \ref{SmallCollisionlemmasum2squares} and inequality \eqref{Tacticalringinequalitysum2squares}, can be used to obtain lower bounds for $\#B_{\mathbf{p}}(\cdot)$. In the subsequent lemmas we pursue lower-bounds for $\#\mathcal{R}_{\theta,1}(\cdot,\cdot)$ under parity conditions on the coefficients of $\mathbf{p}$.
\begin{lem}
\label{Rtheta1example}
If $\mathbf{p}(x) = x^3+a_{2}x^2+a_{1}x+a_{0} \in \Z[x]$ is an irreducible cubic polynomial with $a_{2}^2-a_{1}$ even, and $a_{1}a_{2}-a_{0}$ odd, and $\theta$ is a root of $\mathbf{p}$, we have the estimate
\begin{align*}
\#\mathcal{R}_{\theta,1}(X,X) \gg X^{1/3-o(1)}.
\end{align*}
\end{lem}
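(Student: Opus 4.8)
The plan is to work inside the sextic field $K=\Q(\theta,i)$. Since $K$ contains $i$ it is totally imaginary, so by Dirichlet's unit theorem $\mathcal{O}_K^{\times}$ (hence also the unit group of the order $\Z[\theta,i]$) has rank $0+3-1=2$; this is the ``two-dimensional unit'' input, and $K$ is the ``degree six number field.'' Write $\bar{\,\cdot\,}$ for the generator of $\operatorname{Gal}(K/\Q(\theta))$ (the map fixing $\theta$ and sending $i\mapsto -i$) and $\Nm=\Nm_{K/\Q(\theta)}$ for the relative norm, so that $\Nm(\omega_1+i\omega_2)=\omega_1^2+\omega_2^2$. A pair $(\omega_1,\omega_2)$ lies in $\mathcal{R}_{\theta,1}(X,X)$ precisely when $w:=\omega_1+i\omega_2\in\Z[\theta,i]$ satisfies $w\bar w = n-\theta$ for some integer $n$ with $|n|\le X$, together with the coordinate bound $M_{\theta}(\omega_j)\le X$. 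So I would reduce the lemma to producing $\gg X^{1/3-o(1)}$ elements $w\in\Z[\theta,i]$ with $w\bar w$ of the shape $n-\theta$, $|n|\le X$, and bounded coordinates.

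Next I would make the equation $w\bar w=n-\theta$ explicit. Using $\theta^{3}=-a_2\theta^2-a_1\theta-a_0$ and the consequence $\theta^{4}=(a_2^2-a_1)\theta^2+(a_1a_2-a_0)\theta+a_0a_2$, and writing $\omega_1=p_0+p_1\theta+p_2\theta^2$, $\omega_2=r_0+r_1\theta+r_2\theta^2$, the condition becomes two quadratic equations $Q_2(\omega)=0$ and $Q_1(\omega)=-1$ in the integer vector $\omega=(p_0,p_1,p_2,r_0,r_1,r_2)$, where $Q_2$ and $Q_1$ are the explicit forms returning the $\theta^2$- and $\theta$-coefficients of $\omega_1^2+\omega_2^2$; both are linear in $(p_0,r_0)$ once $(p_1,p_2,r_1,r_2)$ are fixed, with coefficient determinant a multiple of $p_2r_1-p_1r_2$, and $n$ is then the value of a third form $Q_0$. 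The hypotheses that $a_2^2-a_1$ is even and $a_1a_2-a_0$ is odd enter here: in the displayed expansion of $\theta^{4}$ the $\theta^2$-coefficient is even and the $\theta$-coefficient is odd, which is exactly what is needed for the $2$-adic congruences to be solvable, i.e.\ for a square (or a sum of two squares) in $\Z[\theta]$ to be able to carry $\theta$-coefficient $\equiv 1$ and $\theta^2$-coefficient $\equiv 0 \pmod 2$, matching the shape of $n-\theta$. I would then exhibit an explicit shape for $(\omega_1,\omega_2)$ depending on an integer parameter $t$ confined to a residue class dictated by the parity hypotheses, for which $Q_2(\omega)=0$ and $Q_1(\omega)=-1$ reduce to a single divisibility $d(t)\mid e(t)$ with $d,e$ explicit polynomials; the parity hypotheses force $d(t)\mid e(t)$, so $(p_0,r_0)$ are integers, and one checks that $|n|=|Q_0(\omega)|$ and $M_{\theta}(\omega_j)$ are $\ll t^{3}$, so that restricting to $t\ll X^{1/3}$ produces $\gg X^{1/3-o(1)}$ admissible solutions, the $o(1)$ absorbing a divisor-type loss from the excluded values of $t$.

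It remains to check that the coordinate bound is genuinely harmless. For any $w$ with $w\bar w=n-\theta$ one has $\Nm_{K/\Q}(w)=\Nm_{\Q(\theta)/\Q}(w\bar w)=\mathbf{p}(n)$; the relative norm one units $\{u\in\Z[\theta,i]^{\times}:u\bar u=1\}$ form a group of rank $2-1=1$ (since $\Nm(\eta)=\eta^2$ for $\eta\in\mathcal{O}_{\Q(\theta)}^{\times}$, so the norm map on units has rank-one image), and multiplying $w$ by such a unit preserves $w\bar w$ while moving $w$ into a fixed fundamental domain for its action on the one archimedean size not already pinned down by $w\bar w=n-\theta$. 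After this reduction $|w|$ is balanced at all three complex places of $K$, whence $M_{\theta}(\omega_j)\ll \mathbf{p}(n)^{1/6}\ll X^{1/2}\le X$, and the coordinate condition in $\mathcal{R}_{\theta,1}(X,X)$ is automatic. Combining the three steps yields $\#\mathcal{R}_{\theta,1}(X,X)\gg X^{1/3-o(1)}$. I expect the main obstacle to be the construction and counting in the middle step: because squaring doubles coordinate sizes, naive shapes for $(\omega_1,\omega_2)$ only furnish about $X^{1/6}$ or $X^{1/4}$ solutions, and reaching the full $X^{1/3}$ forces one to choose the shape so that the determinant $p_2r_1-p_1r_2$ is of intermediate size --- neither $\pm 1$ nor of order $t^{2}$ --- and to use the parity hypotheses at full strength so that the resulting divisibility holds identically along the family.
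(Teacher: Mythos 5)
Your setup matches the paper's starting point: writing $\omega_{1}=u_{0}+u_{1}\theta+u_{2}\theta^{2}$, $\omega_{2}=v_{0}+v_{1}\theta+v_{2}\theta^{2}$, membership in $\mathcal{R}_{\theta,1}$ becomes the pair of equations $g_{2}(\mathbf{u},\mathbf{v})=0$, $g_{1}(\mathbf{u},\mathbf{v})=-1$, which are linear in $(u_{0},v_{0})$ once the other four coordinates are fixed, and the parities of $a_{2}^{2}-a_{1}$ and $a_{1}a_{2}-a_{0}$ are exactly what makes the resulting half-integer formulas integral. But the heart of the lemma --- actually producing $\gg X^{1/3-o(1)}$ solutions --- is absent: you say you ``would exhibit an explicit shape'' depending on a single parameter $t\ll X^{1/3}$ along which both equations hold identically, $|n|\ll t^{3}$, and a divisibility $d(t)\mid e(t)$ is forced by parity, but no such family is constructed, and your closing paragraph concedes this is the main obstacle. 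The intended shape is also doubtful as described: in a polynomial one-parameter family satisfying the two side equations identically, $n=g_{0}$ generically has degree twice the coordinate degree, which is precisely why (as you note) naive families only give about $X^{1/6}$ or $X^{1/4}$; nothing in the proposal establishes the cancellation needed for $n$ to grow only like $t^{3}$ over a range of length $X^{1/3}$.

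The paper gets the exponent $1/3$ not from one long parameter with a determinant of ``intermediate size'' but from a four-parameter family with unimodular determinant: fix $v_{2}=\alpha$ even and $u_{2}=\beta$ with $1\le\beta\le\alpha\ll X^{1/6}$, $\gcd(\alpha,\beta)=1$, take $u_{1},v_{1}$ odd with $u_{1}\alpha-v_{1}\beta=1$ and $|v_{1}|\ll X^{1/6}$ (so $v_{1}$ runs through a progression modulo $2\alpha$ of length about $X^{1/6}/\alpha$), and let $u_{0},v_{0}$ be the explicit solutions of the linear system, which are integers exactly because $h_{1},h_{2}$ are even under these parity choices and the hypotheses on the $a_{j}$. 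Then $|u_{0}|,|v_{0}|\ll X^{1/2}$ and $|n|\ll X$, and summing $\phi(\alpha)\cdot X^{1/6}/\alpha$ over even $\alpha\ll X^{1/6}$ yields $X^{1/3-o(1)}$. In particular your guidance that the determinant $u_{1}v_{2}-v_{1}u_{2}$ should be ``neither $\pm1$ nor of order $t^{2}$'' points away from the successful construction, which takes it equal to $1$ and harvests the extra factor $X^{1/6}$ from the sum over coprime pairs $(\alpha,\beta)$ rather than from a longer single-parameter range. Finally, the unit-group material in your first and third paragraphs (rank two, relative norm-one units, fundamental domains) is not needed for this lemma: in the paper those arguments enter only in Lemma~\ref{SmallCollisionlemmasum2squares}, while here the coordinate bounds come directly from the explicit formulas for $u_{0},v_{0}$.
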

\begin{lem}
\label{Rtheta2example}
If $\mathbf{p}(x) = x^3+a_{2}x^2+a_{1}x+a_{0} \in \Z[x]$ is an irreducible cubic polynomial with $a_{1},a_{2} \equiv 0 \mod 4$, and $a_{0} \equiv 2 \mod 4$, and $\theta$ is a root of $\mathbf{p}$, we have the estimate
\begin{align*}
\#\mathcal{R}_{\theta,2}(X,X) \gg X^{1/3-o(1)}.
\end{align*}
\end{lem}
Thus Theorem \ref{cubictheorem} follows from Lemma \ref{Rtheta1example}, Lemma \ref{Rtheta2example}, Lemma \ref{SmallCollisionlemmasum2squares} and inequality \eqref{Tacticalringinequalitysum2squares}.

\section{Proof of Lemma \ref{Ringtransferprinclemsum2squares}}
Note that $\mathbf{p}(x)$ is irreducible over $\Q[i]$. If $q(z-\theta) \in \BigSquare{2}(\theta)$, then we have
\begin{align}
\label{z-alphacomplex}
&q(z-\theta) = (v_{0}+v_{1}\theta+v_{2}\theta^2)^2 + (w_{0}+w_{1}\theta+w_{2}\theta^2)^2 \notag\\
&=(v_{0}+v_{1}\theta+v_{2}\theta^2 +i(w_{0}+w_{1}\theta+w_{2}\theta^2))\times\\ &\times(v_{0}+v_{1}\theta+v_{2}\theta^2 -i(w_{0}+w_{1}\theta+w_{2}\theta^2))\notag,
\end{align}
for some rational values $v_{d},w_{d}$. Furthermore, whenever $\alpha$ is a root of $\mathbf{p}$, the identity \eqref{z-alphacomplex} holds with $\theta$ replaced with $\alpha$ (due to the isomorphism $\Q[i][\theta] \rightarrow \Q[i][\alpha]$ fixing $\Q[i]$ and sending $\theta$ to $\alpha$). In particular, we have that
\begin{align*}
P_{1} := \prod_{\theta: \ \mathbf{p}(\theta) = 0}(v_{0}+v_{1}\theta+v_{2}\theta^2 +i(w_{0}+w_{1}\theta+w_{2}\theta^2))
\end{align*}
is a symmetric polynomial over $\theta_{1},\theta_{2},\theta_{3}$ in $\Q[i][\theta_{1},\theta_{2},\theta_{3}]$, where $\theta_{1},\theta_{2},\theta_{3}$ are roots of $\mathbf{p}(x)$ (here we used that $\mathbf{p}(x)$ is irreducible over $\Q[i]$). Thus, $P_{1}$ is a Gaussian rational, say $u+vi \in \Q[i]$. Also note that
\begin{align*}
P_{2} &:= \prod_{\theta: \ \mathbf{p}(\theta) = 0}(v_{0}+v_{1}\theta+v_{2}\theta^2 -i(w_{0}+w_{1}\theta+w_{2}\theta^2))\\
&= \overline{P_{1}} = u-vi.
\end{align*}
Now, with \eqref{z-alphacomplex}, we write
\begin{align*}
q^3\cdot\mathbf{p}(z) = \prod_{\theta: \ \mathbf{p}(\theta) = 0}q(z-\theta) = P_{1}P_{2} = u^2+v^2.
\end{align*}
Thus, $q^3\cdot\mathbf{p}(z)$ is an integer and a sum of two rational squares, implying that $q^3\cdot\mathbf{p}(z) \in \BigSquare{2}$. Since $q \in \BigSquare{2}\setminus\{0\},$ we must have that $\mathbf{p}(z)$ is a sum of two rational squares. Since $\mathbf{p}(z)$ is an integer, we gather that $\mathbf{p}(z) \in \BigSquare{2}$. 
\section{Proof of Lemma \ref{SmallCollisionlemmasum2squares}}
For $Y,Y_{1},Y_{2} \geq 0$ and $q \in \mathbb{N}$, define
\begin{align*}
\mathcal{A}_{\theta}(\omega,Y) := \left\{(\omega_{1},\omega_{2}): \ \omega_{1}^2+\omega_{2}^2 = \omega, \ M_{\theta}(\omega_{i}) \leq Y, \ \omega_{i} \in \Z[\theta]\right\},
\end{align*}
and
\begin{align*}
\mathcal{C}_{\theta}(Y) := \left\{(c_{0}+id_{0}) + (c_{1}+id_{1})\theta +(c_{2}+id_{2})\theta^2: \ c_{j},d_{j}\in \Z, \ |c_{j}|,|d_{j}| \leq Y\right\}.
\end{align*}
Observe that we have
\begin{align}
\label{Ethetaqsetequation}
\mathcal{E}_{\theta,q}(Y_{1},Y_{2}) = \left\{n: \ n\in \Z, \  \#\mathcal{A}_{\theta}(q(n-\theta),Y_{1}) > 0, \ |n| \leq Y_{2}\right\}.
\end{align}
and let $\mathbf{W}_{\theta}: \Q[\theta]\times \Q[\theta] \rightarrow \Q[\theta,i]$ be the bijective map
\begin{align*}
\mathbf{W}_{\theta}(\omega_{1},\omega_{2}) = \omega_{1} + i\omega_{2}.
\end{align*}
When $\theta_{1}$ and $\theta_{2}$ are roots of a degree-$3$ monic irreducible polynomial, say $\mathbf{p}(x)$ in $\Z[x]$, we observe that there exists a unique field isomorphism $\kappa_{\theta_{1},\theta_{2}} : \Q[\theta_{1},i] \rightarrow \Q[\theta_{2},i]$ which fixes $\Q[i]$ but sends $\theta_{1}$ to $\theta_{2}$ (as the polynomial $\mathbf{p}(x)$ is irreducible over $\Q[i]$).
\begin{lem}
\label{Ethethaqlowerbound1}
Let 
\begin{align*}
\mathbf{m}(\theta,q,Y_{1},Y_{2}) = \max\limits_{n \in \mathbb{Z}, \ \lvert n \rvert \leq Y_2}\# \mathcal{A}_{\theta}(q(n-\theta),Y_{1}) + 1.
\end{align*}
We have
\begin{align*}
\#\mathcal{E}_{\theta,q}(Y_{1},Y_{2}) \geq \frac{\#\mathcal{R}_{\theta,q}(Y_{1},Y_{2})}{\mathbf{m}(\theta,q,Y_{1},Y_{2})}.
\end{align*}
\end{lem}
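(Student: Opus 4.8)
The plan is to fiber the set $\mathcal{R}_{\theta,q}(Y_{1},Y_{2})$ over $\mathcal{E}_{\theta,q}(Y_{1},Y_{2})$ and then count. First I would define a map $\Phi\colon \mathcal{R}_{\theta,q}(Y_{1},Y_{2}) \to \Z$ by sending a pair $(\omega_{1},\omega_{2})$ to the integer $n$ for which $\omega_{1}^2+\omega_{2}^2 = q^2(n-\theta)$. This is well defined: writing $q^2(n-\theta) = (q^2 n)\cdot 1 + (-q^2)\theta + 0\cdot\theta^2$ and using that $1,\theta,\theta^2$ are linearly independent over $\Q$ (valid since $\theta$ has degree $3$), the element $\omega_{1}^2+\omega_{2}^2 \in \Z[\theta]$ has a unique coordinate vector, so its constant coordinate divided by $q^2$ recovers $n$ uniquely. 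The only point requiring a word of care in the whole argument is precisely this well-definedness.

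Next I would check that $\Phi$ takes values in $\mathcal{E}_{\theta,q}(Y_{1},Y_{2})$. If $(\omega_{1},\omega_{2}) \in \mathcal{R}_{\theta,q}(Y_{1},Y_{2})$ and $n = \Phi(\omega_{1},\omega_{2})$, then by the definition of $\mathcal{R}_{\theta,q}(Y_{1},Y_{2})$ we have $\omega_{i}\in\Z[\theta]$, $M_{\theta}(\omega_{i})\le Y_{1}$ and $|n|\le Y_{2}$, hence $(\omega_{1},\omega_{2}) \in \mathcal{A}_{\theta}(q^2(n-\theta),Y_{1})$, so $\#\mathcal{A}_{\theta}(q^2(n-\theta),Y_{1}) > 0$; by the description \eqref{Ethetaqsetequation} of $\mathcal{E}_{\theta,q}(Y_{1},Y_{2})$ this says exactly $n \in \mathcal{E}_{\theta,q}(Y_{1},Y_{2})$. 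Thus $\Phi\bigl(\mathcal{R}_{\theta,q}(Y_{1},Y_{2})\bigr)\subseteq \mathcal{E}_{\theta,q}(Y_{1},Y_{2})$.

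Then I would bound the fibers. For $n$ in the image, the fiber $\Phi^{-1}(n)$ consists of pairs $(\omega_{1},\omega_{2})$ with $\omega_{i}\in\Z[\theta]$, $M_{\theta}(\omega_{i})\le Y_{1}$, and $\omega_{1}^2+\omega_{2}^2=q^2(n-\theta)$, i.e. $\Phi^{-1}(n)\subseteq \mathcal{A}_{\theta}(q^2(n-\theta),Y_{1})$; since $|n|\le Y_{2}$, this gives $\#\Phi^{-1}(n) \le \max_{m\in\Z,\,|m|\le Y_{2}}\#\mathcal{A}_{\theta}(q^2(m-\theta),Y_{1}) \le \mathbf{m}(\theta,q,Y_{1},Y_{2})$. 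Summing over the disjoint fibers and using that the image lies in $\mathcal{E}_{\theta,q}(Y_{1},Y_{2})$,
\[
\#\mathcal{R}_{\theta,q}(Y_{1},Y_{2}) = \sum_{n \in \Phi(\mathcal{R}_{\theta,q}(Y_{1},Y_{2}))} \#\Phi^{-1}(n) \;\le\; \#\mathcal{E}_{\theta,q}(Y_{1},Y_{2})\cdot \mathbf{m}(\theta,q,Y_{1},Y_{2}),
\]
and dividing through gives the stated inequality. The harmless $+1$ in the definition of $\mathbf{m}(\theta,q,Y_{1},Y_{2})$ only serves to keep the denominator positive, so the case $\mathcal{R}_{\theta,q}(Y_{1},Y_{2})=\varnothing$ needs no separate treatment. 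I do not expect any genuine obstacle: this is a pigeonhole estimate, and the substantive content — producing many representations $\omega_{1}^2+\omega_{2}^2=q^2(n-\theta)$ and controlling the multiplicity $\mathbf{m}$ — is deferred to the later lemmas.
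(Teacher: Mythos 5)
Your argument is correct and is essentially the paper's own proof: the paper writes $\mathcal{R}_{\theta,q}(Y_{1},Y_{2})$ as the disjoint union of the sets $\mathcal{A}_{\theta}(q^2(n-\theta),Y_{1})$ over $|n|\le Y_{2}$ (disjointness being exactly your well-definedness of $\Phi$, via linear independence of $1,\theta,\theta^2$) and then bounds each piece by $\mathbf{m}(\theta,q,Y_{1},Y_{2})$, just as you do with the fibers of $\Phi$. No gaps.
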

\begin{proof}
Observe that
\begin{align*}
&\bigcup_{n\in\Z, \ |n| \leq Y_{2}}\mathcal{A}_{\theta}(q(n-\theta),Y_{1}) = \\
&= \left\{(\omega_{1},\omega_{2}): \  
\begin{aligned} 
&\omega_{i}\in \Z[\theta], \ M_{\theta}(\omega_{i}) \leq Y_{1}, \\ 
&\exists n \in \Z, \ |n| \leq Y_{2}, \ \omega_{1}^2+\omega_{2}^2 = q(n-\theta)
\end{aligned}\right\}\\
&= \mathcal{R}_{\theta,q}(Y_{1},Y_{2}).
\end{align*}
Furthermore, for $n_{1} \neq n_{2}$ we have
\begin{align*}
\mathcal{A}_{\theta}(q(n_{1}-\theta),Y_{1})\cap \mathcal{A}_{\theta}(q(n_{2}-\theta),Y_{1}) = \emptyset.
\end{align*}
Hence with \eqref{Ethetaqsetequation}, we have
\begin{align*}
\# \mathcal{R}_{\theta,q}(Y_{1},Y_{2}) &= \sum_{n \in \Z, \ |n| \leq Y_{2}}\#\mathcal{A}_{\theta}(q(n-\theta),Y_{1})\\
&= \sum_{n \in \mathcal{E}_{\theta,q}(Y_{1},Y_{2})}\#\mathcal{A}_{\theta}(q(n-\theta),Y_{1})\\
&\leq \mathbf{m}(\theta,q,Y_{1},Y_{2})\cdot\#\mathcal{E}_{\theta,q}(Y_{1},Y_{2}).
\end{align*}
\par \vspace{-\baselineskip} \qedhere
\end{proof}
\begin{lem}
\label{LowerUpperboundsCthetaY}
If $Y \geq 1$ and $w \in \mathcal{C}_{\theta}(Y)$ is a non-zero element of $\Q[\theta,i]$ then 
\begin{align*}
Y^{-2}\ll |w| \ll Y,
\end{align*}
where $|\cdot|$ denotes the complex modulus operator.
\end{lem}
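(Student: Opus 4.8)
The plan is to view each $w \in \mathcal{C}_{\theta}(Y)$ as an algebraic integer in the degree-six field $K := \Q[\theta,i]$ and to play off its field norm $\Nm_{K/\Q}(w)$, which is forced to be a rational integer, against crude archimedean size bounds on the conjugates of $w$. Write $w = f(\theta) + i\,g(\theta)$ with $f,g \in \Z[x]$ of degree at most $2$ whose coefficients are bounded by $Y$ in absolute value, and let $\theta = \theta_1,\theta_2,\theta_3 \in \C$ be the roots of $\mathbf{p}$. Since $\mathbf{p}$ is irreducible over $\Q[i]$ (as noted in the proof of Lemma \ref{Ringtransferprinclemsum2squares}), one has $[K:\Q] = 6$, and the six embeddings $K \hookrightarrow \C$ send $\theta \mapsto \theta_j$ (for $j=1,2,3$) and $i \mapsto \pm i$, so that
\begin{align*}
\Nm_{K/\Q}(w) = \prod_{j=1}^{3} (f(\theta_j) + i\,g(\theta_j))(f(\theta_j) - i\,g(\theta_j)) = \prod_{j=1}^{3} (f(\theta_j)^2 + g(\theta_j)^2) =: N_1 N_2 N_3 .
\end{align*}

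The upper bound is immediate: $|w| \le |f(\theta)| + |g(\theta)| \le 2Y(1 + |\theta| + |\theta|^2) \ll Y$, the implied constant depending only on $\theta$.

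For the lower bound I would first check that $\Nm_{K/\Q}(w)$ is a \emph{nonzero} rational integer. It lies in $\Z$ because $w$ is an algebraic integer; it is nonzero because no factor $f(\theta_j) \pm i\,g(\theta_j)$ can vanish, since a vanishing would make the polynomial $f \pm i\,g \in \Q[i][x]$, of degree less than $3$, vanish at the root $\theta_j$ of the $\Q[i]$-irreducible cubic $\mathbf{p}$, forcing $f = g = 0$ and hence $w = 0$. Thus $|N_1 N_2 N_3| \ge 1$. Now I would use that $\theta = \theta_1$ is real (we are in the setting of Lemma \ref{SmallCollisionlemmasum2squares}): then $f(\theta_1), g(\theta_1) \in \R$, so $N_1 = |w|^2$, while for $j = 2,3$ the triangle inequality gives $|N_j| \le |f(\theta_j)|^2 + |g(\theta_j)|^2 \ll Y^2$ with constant depending only on $\theta$. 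Hence
\begin{align*}
|w|^2 = |N_1| = \frac{|N_1 N_2 N_3|}{|N_2|\,|N_3|} \ge \frac{1}{|N_2|\,|N_3|} \gg Y^{-4},
\end{align*}
that is, $|w| \gg Y^{-2}$.

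The only delicate point — the main obstacle, such as it is — is the non-vanishing of $\Nm_{K/\Q}(w)$: one must rule out that some archimedean conjugate $f(\theta_j) \pm i\,g(\theta_j)$ of $w$ collapses to $0$, and this is precisely where the irreducibility of $\mathbf{p}$ over $\Q[i]$ (equivalently $[\Q[\theta,i]:\Q]=6$) is invoked. Everything else is bookkeeping with the triangle inequality, and all implied constants depend only on the fixed conjugates $\theta_1,\theta_2,\theta_3$, hence only on $\theta$; the hypothesis $Y \ge 1$ is used only to keep these estimates in the clean form $\ll Y$ and $\gg Y^{-2}$.
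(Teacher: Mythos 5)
Your proof is, at its core, the same argument as the paper's: the lower bound comes from a product of conjugates of $w$ being a nonzero algebraic-integer norm, hence of absolute value at least $1$, with non-vanishing supplied exactly as in the paper by the irreducibility of $\mathbf{p}$ over $\Q[i]$, and with the remaining conjugate factors bounded by $\ll Y$ via the triangle inequality. The only structural difference is that the paper takes the partial norm to $\Q[i]$, i.e.\ the Gaussian integer $\prod_{j=1}^{3}\bigl(f(\theta_j)+i\,g(\theta_j)\bigr)$, so that $|w|\cdot O(Y^{2})\geq 1$ at once, while you take the full norm to $\Q$ over all six embeddings.

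There is, however, one point where your write-up proves less than the statement: to get $N_1=|w|^{2}$ you invoke that $\theta$ is real. The lemma as stated carries no such hypothesis, and the paper actually applies it with $\theta$ replaced by the conjugate roots $\theta_j$, $j\in\{2,3\}$, inside the proof of Lemma \ref{modulusoneorbitbound} (through the sets $\mathcal{C}_{\theta_j}(Y)$); when $\mathbf{p}$ has only one real root these $\theta_j$ are complex, and then $f(\theta_1)^{2}+g(\theta_1)^{2}$ is no longer $|w|^{2}$, since $f(\theta_1)-i\,g(\theta_1)$ is not $\overline{w}$. Moreover, simply bounding the five non-identity factors of your six-fold product by $\ll Y$ would only give $|w|\gg Y^{-5}$, which is too weak. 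The repair is easy and worth making explicit: either argue as the paper does with the three-factor Gaussian-integer product, whose modulus is at least $1$ while the two factors other than $w$ are $\ll Y$, yielding $|w|\gg Y^{-2}$ for every root $\theta$; or, within your six-factor norm, pair the identity embedding with the embedding $\theta\mapsto\overline{\theta}$, $i\mapsto -i$, whose value is exactly $\overline{w}$, so that $|w|^{2}\gg Y^{-4}$ after bounding the remaining four factors by $\ll Y$. With that adjustment your proof covers the statement in the generality in which it is used.
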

\begin{proof}
The upper bound follows from the triangle inequality. Let $\mathbf{p}(x)$ be the degree-three monic irreducible polynomial for $\theta$ with roots $\theta_{1},\theta_{2}$ and $\theta_{3}$ respectively, where without loss of generality we may assume $\theta = \theta_{1}$. Suppose that 
\begin{align*}
w = (c_{0}+id_{0}) + (c_{1}+id_{1})\theta +(c_{2}+id_{2})\theta^2
\end{align*}
for integers $c_{j},d_{j}$. Then 
\begin{align*}
\prod_{j=1}^{3}\left((c_{0}+id_{0}) + (c_{1}+id_{1})\theta_{j} +(c_{2}+id_{2})\theta_{j}^2\right)
\end{align*}
is a symmetric polynomial in $\Z[i][\theta_{1},\theta_{2},\theta_{3}]$ and thus is a Gaussian integer, necessarily non-zero as $w \neq 0$. Thus we have
\begin{align*}
|w|\left|\prod_{j=2}^{3}\left((c_{0}+id_{0}) + (c_{1}+id_{1})\theta_{j} +(c_{2}+id_{2})\theta_{j}^2\right)\right| \geq 1,
\end{align*}
or $|w|\cdot Y^{2} \gg 1$.
\end{proof}
\begin{lem}
\label{betaimnotinZ*}
If $\beta$ is an algebraic integer of degree-three, then $\Im(\beta) \not \in \Z\setminus\{0\}$.
\end{lem}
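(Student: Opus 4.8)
**Proof proposal for Lemma \ref{betaimnotinZ*}.**

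The plan is to argue by contradiction: suppose $\beta$ is an algebraic integer of degree $3$ with $\Im(\beta) = m$ for some nonzero integer $m$. The key observation is that $\beta$ cannot be real (since $m \neq 0$), so its minimal polynomial $\mathbf{p}(x) \in \Z[x]$, being an irreducible cubic, must have exactly one real root and one pair of complex-conjugate roots; thus $\overline{\beta}$ is also a root of $\mathbf{p}$, and $\overline{\beta} = \beta - 2mi$ is an algebraic integer as well. First I would form the difference $\beta - \overline{\beta} = 2mi$, which is therefore an algebraic integer lying in $\Q(i)$; since $\Z[i]$ is the ring of integers of $\Q(i)$, this is automatic and gives nothing yet. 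The useful move is instead to consider $(\beta - \overline\beta)^2 = -4m^2$, or better, to look at the real algebraic integer $\beta + \overline\beta = 2\,\mathrm{Re}(\beta)$: this is an algebraic integer of degree $1$ or $2$ over $\Q$, hence $2\,\mathrm{Re}(\beta) \in \Z$ or generates a real quadratic field.

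Here is the cleaner route I would actually carry out. Write $\mathbf{p}(x) = (x - \gamma)(x - \beta)(x - \overline\beta)$ with $\gamma \in \R$ the real root. Then
\begin{align*}
\mathbf{p}(x) = (x-\gamma)\bigl(x^2 - 2\,\mathrm{Re}(\beta)\,x + |\beta|^2\bigr),
\end{align*}
and comparing coefficients, $\mathbf{p}(x) = x^3 + a_2 x^2 + a_1 x + a_0 \in \Z[x]$ gives $\gamma + 2\,\mathrm{Re}(\beta) = -a_2$, so $2\,\mathrm{Re}(\beta) = -a_2 - \gamma$. Now evaluate $\mathbf{p}$ at $\beta$ directly: since $\mathbf{p}(\beta) = 0$ and $\beta = \mathrm{Re}(\beta) + mi$, separating real and imaginary parts of $\mathbf{p}(\mathrm{Re}(\beta) + mi) = 0$ yields two real equations. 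The imaginary part, after dividing by $m \neq 0$, is a polynomial relation that pins down $\mathrm{Re}(\beta)$: explicitly $\Im(\mathbf{p}(\beta)) = m\bigl(3\,\mathrm{Re}(\beta)^2 - m^2 + 2a_2 \mathrm{Re}(\beta) + a_1\bigr) = 0$, so $\mathrm{Re}(\beta)$ is a root of the quadratic $3t^2 + 2a_2 t + (a_1 - m^2) \in \Z[t]$. Combined with the real part $\Re(\mathbf{p}(\beta)) = \mathrm{Re}(\beta)^3 - 3\,\mathrm{Re}(\beta)\,m^2 + a_2(\mathrm{Re}(\beta)^2 - m^2) + a_1 \mathrm{Re}(\beta) + a_0 = 0$, one eliminates $m^2$ between the two equations to get a nonzero polynomial in $\mathrm{Re}(\beta)$ of degree $\le 3$ with integer coefficients whose leading behaviour forces $\mathrm{Re}(\beta)$ to be a rational number with bounded denominator — in fact, substituting $m^2 = 3\,\mathrm{Re}(\beta)^2 + 2a_2\mathrm{Re}(\beta) + a_1$ from the imaginary equation into the real equation, the cubic and quadratic terms must combine to show $8\,\mathrm{Re}(\beta)^3$ plus integer-coefficient lower-order terms vanishes, so $2\,\mathrm{Re}(\beta)$ is a rational integer. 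But then $\gamma = -a_2 - 2\,\mathrm{Re}(\beta) \in \Z$, contradicting irreducibility of $\mathbf{p}$ over $\Q$ (a cubic with an integer root factors).

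The main obstacle is making the elimination step genuinely airtight: I must check that after substituting the expression for $m^2$ the resulting polynomial in $\mathrm{Re}(\beta)$ is \emph{not} identically zero and has the claimed shape, so that it forces $2\,\mathrm{Re}(\beta) \in \Z$ rather than merely $\mathrm{Re}(\beta)$ algebraic of degree $2$. An alternative that sidesteps this bookkeeping, which I would use as a backup, is to invoke that $\beta$ and $\overline\beta$ are conjugate algebraic integers, so $\beta - \overline\beta = 2mi$ is an algebraic integer all of whose conjugates have the form $\sigma(\beta) - \sigma(\overline\beta)$; but $\gamma$ is fixed by complex conjugation, forcing $2mi$ to have a conjugate equal to $0$ (the one coming from the embedding identifying $\beta$ with $\gamma$ is not available — rather, one works inside the Galois closure and notes $2mi$ and $-2mi$ and $0$ cannot all be Galois conjugates of a single number unless $m = 0$, since an algebraic integer with $0$ among its conjugates is $0$). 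This second argument is shorter but requires care about the Galois closure of a non-Galois cubic; I expect the author's proof to take whichever of these is cleanest, and I would present the explicit real/imaginary-part computation since it is the most self-contained.
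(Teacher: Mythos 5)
Your imaginary-part computation is correct, and it is in substance the paper's own proof: writing $\beta = r + mi$ with $m \in \Z\setminus\{0\}$, the vanishing of $\mathrm{Im}(\mathbf{p}(\beta))$ gives $3r^2 + 2a_2 r + (a_1 - m^2) = 0$, a nonzero quadratic over $\Q$ (this is exactly where integrality of $m$ enters; the paper gets the same information by comparing coefficients of $\mathbf{p}$ against the roots $r\pm mi$ and the real root $\gamma$). At that point you are done without any elimination: since $2r = -a_2 - \gamma$ and $\gamma$ is a root of the irreducible cubic $\mathbf{p}$, the number $r$ has degree $3$ over $\Q$, contradicting the quadratic relation. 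The real-part equation is not needed.

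The genuine gap is in the finishing move you actually propose. Substituting $m^2 = 3r^2 + 2a_2 r + a_1$ into the real-part equation yields, with $s = 2r$,
\begin{align*}
s^3 + 2a_2 s^2 + (a_2^2 + a_1)\,s + (a_1 a_2 - a_0) = 0,
\end{align*}
and this cubic is precisely $-\mathbf{p}(-a_2 - x)$ evaluated at $s$: it merely re-encodes $\mathbf{p}(\gamma)=0$ through $\gamma = -a_2 - s$. In particular it is irreducible over $\Q$, so it cannot force $s = 2r$ to be rational, let alone an integer --- indeed $2r = -a_2-\gamma$ has degree $3$ --- so the claim that the elimination shows ``$2\,\mathrm{Re}(\beta)$ is a rational integer'' is false, and the intended contradiction via $\gamma \in \Z$ is unreachable along that route (you correctly flagged this step as the weak point). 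Your backup Galois argument is also incorrect as stated: $0$ is never among the conjugates of $\beta - \overline{\beta}$, since any automorphism of the Galois closure sends the ordered pair $(\beta,\overline{\beta})$ to an ordered pair of \emph{distinct} roots; a salvageable version would instead observe that if $\beta-\overline{\beta}=2mi$ with $m\in\Z\setminus\{0\}$ then every conjugate of it lies in $\{\pm 2mi\}$, while $\beta-\gamma$ is such a conjugate and equals neither ($\gamma=\beta\mp 2mi$ would force $\gamma$ to be $\overline{\beta}$ or non-real). The simplest repair of your write-up is to drop the elimination and backup entirely and finish as the paper does, from the quadratic alone.
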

\begin{proof}
Suppose $\beta = \mu + ih$ is an algebraic integer of degree-three for some $h \in \Z \setminus\{0\}$ and $\mu \in \R$. Then it has a minimal polynomial $x^3+a_{2}x^2+a_{1}x+a_{0}\in \Z[x]$ with roots $\mu + ih$, $\mu -ih,$ and $\theta$, where $\theta$ is real. By comparing with values of coefficients, we have $\theta + 2\mu \in \Z$ and $2\theta\mu+\mu^2+h^2 \in \Z$; therefore, $\theta$ is a root of a quadratic polynomial in $\Q[x]$, but this is impossible as $\theta$ is of degree-three (otherwise $\beta$ would be of degree $\leq 2$).  
\end{proof}
\begin{lem}
\label{theta+ialgdeg6}
If $\theta$ is an algebraic integer of degree-three with minimal polynomial $P(x)$ then $\theta+i$ is an algebraic integer of degree-six. Furthermore the minimal polynomial of $\theta+i$ over $\Z$ is $Q(x)$, where
\begin{align*}
Q(x) = \prod_{d=1}^{3}(x-(\theta_{d}+i))(x-(\theta_{d}-i)),
\end{align*}
here $\theta_{1},\theta_{2}$ and $\theta_{3}$ are the distinct roots of $P(x)$.
\end{lem}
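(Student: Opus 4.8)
The plan is to establish the statement in three stages: (i) that $\theta+i$ is an algebraic integer; (ii) that the displayed polynomial $Q$ lies in $\Z[x]$, is monic of degree $6$, and vanishes at $\theta+i$; and (iii) that $\theta+i$ has degree exactly $6$ over $\Q$, which together with (ii) forces $Q$ to be its minimal polynomial. For (i) one only needs that $\theta$ and $i$ are algebraic integers, hence so is $\theta+i$ (and likewise each $\theta_d\pm i$).

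For (ii), I would set $A(x):=\prod_{d=1}^{3}\bigl(x-(\theta_d+i)\bigr)$ and observe that its coefficients are, up to sign, the elementary symmetric functions of $\theta_1+i,\theta_2+i,\theta_3+i$; these are integer-coefficient polynomial expressions in $i$ and in the elementary symmetric functions of $\theta_1,\theta_2,\theta_3$, the latter being integers since they are (up to sign) the coefficients of $P$. Hence $A\in\Z[i][x]$. The second factor appearing in $Q$ is obtained from $A$ by the substitution $i\mapsto -i$, i.e. it has coefficients conjugate to those of $A$, so for real $x$ one gets $Q(x)=A(x)\,\overline{A(x)}\in\R$. Therefore each coefficient of $Q$ lies in $\Z[i]\cap\R=\Z$; monicity, degree $6$, and $Q(\theta+i)=0$ are then immediate from the product form.

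The substance of the proof is (iii). Since $[\Q(\theta):\Q]=3$ is odd, $i\notin\Q(\theta)$, so $[\Q(\theta,i):\Q]=6$; as $\Q(\theta+i)\subseteq\Q(\theta,i)$, the degree $e:=[\Q(\theta+i):\Q]$ divides $6$. I would then exclude the proper divisors: $e\neq 1$ because $\theta+i\notin\R$; $e\neq 2$ because a rational quadratic vanishing at $\theta+i$ must also vanish at the complex conjugate $\theta-i$, hence equal $x^2-2\theta x+(\theta^2+1)$, forcing $2\theta\in\Q$ and contradicting $[\Q(\theta):\Q]=3$; and $e\neq 3$ by Lemma \ref{betaimnotinZ*}, since in that case $\theta+i$ would be a degree-$3$ algebraic integer with $\Im(\theta+i)=1\in\Z\setminus\{0\}$. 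Hence $e=6$, and the monic degree-$6$ polynomial $Q\in\Z[x]$ with $Q(\theta+i)=0$ is the minimal polynomial.

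The main obstacle — modest as it is here — is step (iii), and within it the case $e=3$, which is precisely where the previously established Lemma \ref{betaimnotinZ*} is used; the cases $e=1,2$ and all of step (ii) are routine manipulations with symmetric functions and complex conjugation.
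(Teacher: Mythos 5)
Your route is genuinely different from the paper's. The paper proves the lemma in one stroke by factoring $Q=P_{1}P_{2}$ with $P_{1}(x)=P(x+i)$, $P_{2}(x)=P(x-i)$, noting that $P_{1},P_{2}$ are irreducible over $\Q[i]$ with non-real $x^{2}$-coefficients $a_{2}\pm 3i$, and using unique factorization in $\Q[i][x]$ to rule out any splitting of $Q$ into two nonconstant rational factors; irreducibility of $Q$ then gives the degree $6$ directly. You instead verify $Q\in\Z[x]$ with $Q(\theta+i)=0$ (your steps (i)--(ii) are fine and essentially match the paper's computation that $Q$ has integer coefficients) and then pin down $e=[\Q(\theta+i):\Q]$ by excluding the proper divisors of $6$, using Lemma \ref{betaimnotinZ*} for $e=3$. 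When $\theta$ is real this works and is a clean alternative; it buys a more field-theoretic argument at the cost of a case analysis, whereas the paper's factorization argument proves irreducibility of $Q$ for every root of $P$ at once.

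There is, however, a gap against the statement as written, which allows $\theta$ to be \emph{any} algebraic integer of degree $3$, in particular a non-real root of $P$: all three exclusions in your step (iii) silently assume $\theta\in\R$. The complex conjugate of $\theta+i$ is $\overline{\theta}-i$, not $\theta-i$, so your $e=2$ argument does not yield $x^{2}-2\theta x+(\theta^{2}+1)$; and $\Im(\theta+i)=\Im(\theta)+1$ need not be a nonzero integer, so Lemma \ref{betaimnotinZ*} does not exclude $e=3$ (your $e=1$ claim that $\theta+i\notin\R$ also needs Lemma \ref{betaimnotinZ*} applied to $\theta$, or the direct remark that $\theta+i\in\Q$ would put $\theta$ in $\Q[i]$). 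The cases $e=1,2$ are easy to repair without reality: since $\Q[\theta,i]=\Q[\theta+i][i]$, you get $6=[\Q[\theta,i]:\Q]\leq 2e$, hence $e\geq 3$. But $e=3$ for non-real $\theta$ genuinely needs a different idea, for instance the paper's: a monic rational cubic dividing $Q$ would, by unique factorization in $\Q[i][x]$, have to coincide with $P_{1}$ or $P_{2}$, whose $x^{2}$-coefficients are non-real. So either restrict your proof to real $\theta$ (the only case the paper ultimately uses, but not what the lemma asserts) or patch the $e=3$ case along these lines.
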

\begin{proof}
Let $P(x) = x^3+a_{2}x^2+a_{1}x+a_{0} \in \Z[x]$. We can consider the polynomials $P_{1}(x)$ and $P_{2}(x)$ in $\Z[i][x]$ which are defined as the binomial expansions of $P(x+i)$ and $P(x-i),$ respectively. It is clear that $P(x)$ is irreducible over $\Q[i]$ (otherwise $[\Q[i]:\Q] \geq 3$, which is impossible); thus $P_{1}(x)$ and $P_{2}(x)$ are irreducible over $\Q[i]$. Neither $P_{1}(x)$ nor $P_{2}(x)$ is a polynomial in $\Z[x]$ as the $x^2$ coefficients of the polynomials are $3i+a_{2}$ and $(-3i+a_{2})$ respectively. Observe that the polynomial $Q(x) = P_{1}(x)P_{2}(x)$ has integer-valued coefficients, thus the minimal polynomial of $\theta+i$ over the integers divides $Q(x)$.

If $Q(x)$ factors as $H_{1}(x)H_{2}(x)$, where $H_{j}(x) \in \mathbb{Q}[x]$ and $\deg(H_{j}(x)) \geq 1$, then it factors similarly as $H_{1}(x)\cdot H_{2}(x)$ in $\Q[i][x]$. Since $\Q[i][x]$ is a unique factorization domain with units $c$, where $c \in \Q[i]\setminus\{0\}$, we have $H_{1}(x) = c P_{\sigma(1)}(x)$ and $H_{2}(x) = (1/c) P_{\sigma(2)}(x) $, where $\sigma : \{1,2\}\rightarrow \{1,2\}$ is a permutation map and $c \in \Q[i]$, $c \neq 0$. If $c$ is real, then $H_{j}(x) \not\in \Z[x]$, which is impossible. If $c$ is complex, then the $x^3$ coefficients of $H_{1}(x), H_{2}(x)$ are complex, which is also impossible. Hence the polynomial $Q(x)$ is irreducible over $\Z$.
\end{proof}
\begin{lem}
\label{ranktwounitlemmadeg6}
If $\theta$ is an algebraic integer of degree-three then the group of units of $\mathcal{O}_{\Q[\theta,i]}$ has rank two. 
\end{lem}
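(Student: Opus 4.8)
The plan is to reduce the statement to Dirichlet's unit theorem, which determines the rank of the unit group $\mathcal{O}_{K}^{\times}$ of $K=\Q[\theta,i]$ purely from the signature $(r_{1},r_{2})$ of $K$. The first step is to pin down $[K:\Q]$. Since $[\Q[\theta]:\Q]=3$ and $[\Q[i]:\Q]=2$ are coprime, the compositum $\Q[\theta,i]$ has degree divisible by both $2$ and $3$, hence by $6$, while it is visibly at most $3\cdot 2=6$; therefore $[K:\Q]=6$. (This is also what Lemma \ref{theta+ialgdeg6} gives, since it exhibits $\theta+i\in K$ as an algebraic integer of degree $6$, whence $K=\Q[\theta+i]$.) Note that this does not depend on whether $\theta$ is real.

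The second step is the observation that $K$ admits no real embedding into $\C$: any field homomorphism $\sigma\colon K\hookrightarrow\C$ must send $i$ to a root of $x^{2}+1$, i.e. to $\pm i\notin\R$, so $\sigma(K)\not\subseteq\R$. Hence $r_{1}=0$, and the relation $r_{1}+2r_{2}=[K:\Q]=6$ forces $r_{2}=3$. Dirichlet's unit theorem then yields
\begin{align*}
\operatorname{rank}\bigl(\mathcal{O}_{K}^{\times}\bigr)=r_{1}+r_{2}-1=0+3-1=2,
\end{align*}
which is the assertion of the lemma.

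I do not anticipate a serious obstacle here. The two points worth a word of care are: the statement concerns the full ring of integers $\mathcal{O}_{K}$ rather than the possibly non-maximal order $\Z[\theta,i]$, which is harmless because the rank in Dirichlet's theorem depends only on $r_{1}$ and $r_{2}$; and the degree computation, which is immediate from the coprimality of $2$ and $3$ (or from Lemma \ref{theta+ialgdeg6}). Should one later want the rank-two unit group more concretely --- as suggested by the ``two-dimensional unit argument'' mentioned in the introduction --- one would supplement this with structural information about $K$ (for real $\theta$ it is a quadratic totally imaginary extension of $\Q[\theta]$, and Lemma \ref{betaimnotinZ*} constrains which imaginary parts can occur), but such refinements are not needed for the rank statement itself.
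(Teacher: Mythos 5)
Your proof is correct and follows essentially the same route as the paper: both establish $[\Q[\theta,i]:\Q]=6$, observe that the field has no real embeddings so its signature is $(0,3)$, and conclude from Dirichlet's unit theorem that the unit rank is $0+3-1=2$. The only difference is cosmetic --- the paper gets $r_{1}=0$ by applying Lemma \ref{betaimnotinZ*} to the roots $\theta_{d}\pm i$ of the degree-six minimal polynomial $Q$ from Lemma \ref{theta+ialgdeg6}, whereas you note directly that any embedding must send $i\mapsto\pm i$, a slightly more direct path to the same fact.
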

\begin{proof}
If $K=\Q[\theta,i]$, then by Lemma \ref{theta+ialgdeg6} we have $K = \Q[\theta+i]$, with the minimal polynomial of $\theta+i$ being $Q(x)$. By Lemma \ref{betaimnotinZ*}, all the roots of $Q$ are complex-valued. Thus by Dirichlet's unit theorem the rank of the group of units of $\mathcal{O}_{K}$ is $(6/2)-1 = 2$.
\end{proof}
\begin{lem}
\label{unitmodulusoneandnotmodulusone}
If $\theta$ is 3-valid, there exists a finite set $W$ of units in $\mathcal{O}_{\Q[\theta,i]}$, such that for every unit $u \in \mathcal{O}_{\Q[\theta,i]}$ there exist integers $t_{1}$ and $t_{2}$ satisfying
\begin{align*}
u = w\cdot l_{1}^{t_{1}}l_{2}^{t_{2}},
\end{align*}
where $l_{1},l_{2}$ are units in $\mathcal{O}_{\Q[\theta,i]}$ with the complex modulus of $l_{1}$ not equal to $1$ ($|l_{1}| \neq 1$), $|l_{2}| = 1$, and $w$ is some unit belonging to $W$.
\end{lem}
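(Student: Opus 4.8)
The plan is to combine Lemma~\ref{ranktwounitlemmadeg6} with Dirichlet's unit theorem and the action of complex conjugation. Write $K:=\Q[\theta,i]$ and $K^{+}:=\Q[\theta]$. Since $\theta$ is real, $K^{+}$ has a real embedding, so $-1$ is not a square in $K^{+}$, $K/K^{+}$ is a degree-$2$ extension, and there is a unique nontrivial automorphism $\sigma$ of $K$ fixing $K^{+}$ pointwise, namely $\sigma(\theta)=\theta$, $\sigma(i)=-i$. Under the embedding $K\hookrightarrow\C$ that sends $\theta$ to its real value and $i$ to $i$ (the one implicit in the notation $|\cdot|$), $\sigma$ coincides with complex conjugation; hence $|\sigma(u)|=|u|$ for every $u\in\mathcal{O}_{K}^{*}$. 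Two consequences: $u/\sigma(u)$ always has complex modulus $1$; and, $K^{+}$ being the fixed field of $\sigma$, the unit $u/\sigma(u)$ is a root of unity if and only if $u^{N}\in K^{+}$ — equivalently $u^{N}\in\mathcal{O}_{K^{+}}^{*}=\mathcal{O}_{K}^{*}\cap K^{+}$ — for some $N\ge1$.

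For $l_{1}$ I would take any $\eta\in\mathcal{O}_{K^{+}}^{*}$ of infinite order, which exists because $K^{+}$ is cubic; under the chosen embedding $\eta$ is a real algebraic number which is not $\pm1$, so $|l_{1}|:=|\eta|\ne1$. For $l_{2}$: by Lemma~\ref{ranktwounitlemmadeg6} the group $\mathcal{O}_{K}^{*}$ has rank $2$, whereas Dirichlet's theorem gives $\operatorname{rank}\mathcal{O}_{K^{+}}^{*}=1$ (the cubic field $K^{+}$ has one real and one complex place — we return to this below). Hence $\mathcal{O}_{K}^{*}/\mathcal{O}_{K^{+}}^{*}$ is infinite, so I may choose $u\in\mathcal{O}_{K}^{*}$ whose class in this quotient has infinite order; then $u^{N}\notin\mathcal{O}_{K^{+}}^{*}$, and therefore $u^{N}\notin K^{+}$, for every $N\ge1$. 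By the first paragraph, $l_{2}:=u/\sigma(u)$ is a unit with $|l_{2}|=1$ and of infinite order.

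Finally I would check that $\langle l_{1},l_{2}\rangle$ has finite index in $\mathcal{O}_{K}^{*}$ and take $W$ to be a finite transversal. Multiplicative independence modulo roots of unity is immediate: from a relation $l_{1}^{a}l_{2}^{b}=\zeta$ with $\zeta$ a root of unity, raise to $\ord(\zeta)$ and apply $|\cdot|$ to get $|l_{1}|^{a\,\ord(\zeta)}=1$, forcing $a=0$, and then $b=0$ since $l_{2}$ has infinite order. Thus $\langle l_{1},l_{2}\rangle$ is a torsion-free subgroup of rank $2$ in the rank-$2$ group $\mathcal{O}_{K}^{*}$, hence of finite index; letting $W$ be a set of coset representatives, every unit $u$ can be written $u=w\,l_{1}^{t_{1}}l_{2}^{t_{2}}$ with $w\in W$ and $t_{1},t_{2}\in\Z$, which is the claim.

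I expect the construction of $l_{2}$ to be the real obstacle, and it is precisely there that the shape of the minimal polynomial $\mathbf{p}$ of $\theta$ enters: the argument needs $\operatorname{rank}\mathcal{O}_{K^{+}}^{*}=1$, i.e.\ $\theta$ must have a pair of non-real conjugates. If $\mathbf{p}$ is totally real then $K=K^{+}(i)$ is a CM field, $\mathcal{O}_{K^{+}}^{*}$ already has rank $2$ and is of finite index in $\mathcal{O}_{K}^{*}$, and any unit of complex modulus $1$ has a power lying in $K^{+}$ with real absolute value $1$, hence equal to $\pm1$ — so no unit of modulus $1$ has infinite order, and the statement would fail in that case. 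I would therefore invoke the lemma only when $\theta$ has exactly one real conjugate (the case the argument above settles cleanly), and either exclude the totally real case from the hypotheses or address it by a separate argument.
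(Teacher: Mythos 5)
Your argument, in the case it covers, follows essentially the paper's route: both proofs take for $l_{1}$ an infinite-order real unit coming from $\mathcal{O}_{\Q[\theta]}$, take for $l_{2}$ an infinite-order unit of complex modulus one, and then use the rank-two structure from Lemma \ref{ranktwounitlemmadeg6} to produce the finite set $W$ (the paper does this by explicit determinant bookkeeping with a fixed pair of fundamental units, you by observing that $\langle l_{1},l_{2}\rangle$ has finite index and taking a transversal; both are fine). The one genuine difference is the source of $l_{2}$: the paper imports it from \cite[Theorem 1]{Daileda}, while you construct it as $u/\sigma(u)$ for a unit $u$ of infinite order modulo $\mathcal{O}_{\Q[\theta]}^{*}$, which exists because $\mathrm{rank}\,\mathcal{O}_{\Q[\theta,i]}^{*}=2>1=\mathrm{rank}\,\mathcal{O}_{\Q[\theta]}^{*}$ when $\theta$ has exactly one real conjugate. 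That is a correct, self-contained proof of precisely the input the paper cites, and it has the merit of making visible where the signature of $\Q[\theta]$ enters.

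Your caveat about the totally real case is substantiated, and the defect it points to lies in the statement and in the paper's proof, not in your argument. If all conjugates of $\theta$ are real, then $\Q[\theta,i]$ is a CM field: complex conjugation commutes with every embedding, so a unit of modulus one has all of its conjugates on the unit circle and is a root of unity by Kronecker's theorem. Hence no infinite-order $l_{2}$ with $|l_{2}|=1$ exists (so the conclusion the paper draws from \cite[Theorem 1]{Daileda} is unavailable there), and the lemma itself fails: with $l_{2}$ necessarily torsion, the elements $w\,l_{1}^{t_{1}}l_{2}^{t_{2}}$ occupy only finitely many cosets of a subgroup of rank at most one, which cannot exhaust a unit group of rank two. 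Note the parity hypotheses of Theorem \ref{cubictheorem} do not exclude totally real cubics (e.g.\ $x^{3}-4x+1$), so this case cannot simply be discarded downstream: one must restrict the lemma to $\theta$ with a unique real conjugate and treat the CM case separately in Lemma \ref{twodimensionalunitlemma} (there the unit group is, up to torsion and finite index, the rank-two group of real units of $\Q[\theta]$, and the $O(\log^{2}Y)$ count can be recovered by a direct lattice-point argument in the logarithmic embedding), or find an equivalent repair. In short: your proof is correct and matches the paper's approach where the lemma is true, and the restriction you insist on identifies a real gap in the paper's own treatment.
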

\begin{proof}
By Lemma \ref{ranktwounitlemmadeg6}, there exists two multiplicatively independent units $u_{1}$ and $u_{2}$ in $\mathcal{O}_{\Q[\theta,i]}$ such that every unit $u$ in $\mathcal{O}_{\Q[\theta,i]}$ is of the form
\begin{align*}
u= T \cdot u_{1}^{t_{1}}u_{2}^{t_{2}},
\end{align*}
for some $T \in \text{Tor}(\mathcal{O}_{\Q[\theta,i]})$ and integers $t_{1}$ and $t_{2}$. By Dirichlet's unit theorem there exists a unit $q_{1}$ of infinite order in $\mathcal{O}_{\Q[\theta]}$, which is also a real unit in $\mathcal{O}_{\Q[\theta,i]}$. We necessarily have $|q_{1}| \neq 1$; also, by Dirichlet's unit theorem we have $\text{rank}\left(\mathcal{O}_{\Q[\theta]}^{\times}\right) = 1 + 2/2 - 1 = 1$.

Let $V_{\Q[\theta,i]}$ be the group of units of $\Q[\theta,i]$ which have complex modulus of one. Since $\Q[\theta,i]$ is closed under complex conjugation, by \cite[Theorem 1]{Daileda} and Lemma \ref{ranktwounitlemmadeg6} we have
\begin{align*}
\text{rank}\left(\mathcal{O}_{\Q[\theta]}^{\times}\right)+\text{rank}\left(V_{\Q[\theta,i]}\right) = \text{rank}\left(\mathcal{O}_{\Q[\theta,i]}^{\times}\right) = 2.
\end{align*}
Thus, there exists a unit $q_{2}$ of $\mathcal{O}_{\Q[\theta,i]}$ of infinite order with $|q_{2}| = 1$ (this unit is not a torsion element). We observe that there exist integers $s_{1},s_{2},y_{1},y_{2}$ and two torsion elements $g,h \in \text{Tor}(\mathcal{O}_{\Q[\theta,i]})$ such that
\begin{align*}
q_{1} = g \cdot u_{1}^{s_{1}}u_{2}^{s_{2}},
\end{align*}
and
\begin{align*}
q_{2} = h \cdot u_{1}^{y_{1}}u_{2}^{y_{2}}.
\end{align*}
Take $l_{1} = q_{1}g^{-1}$ and $l_{2} = q_{2}h^{-1}$. Observe that $l_{1}$ and $l_{2}$ are multiplicatively independent, thus the matrix 
\begin{align*}
\begin{bmatrix}
s_{1} & y_{1}\\
s_{2} & y_{2}
\end{bmatrix}
\end{align*}
has non-zero determinant. Let $D = |s_{1}y_{2}-y_{1}s_{2}|$, so that $D \geq 1$, and put
\begin{align*}
W = \{T\cdot u_{1}^{v_{1}}u_{2}^{v_{2}}: \ T\in \text{Tor}(\mathcal{O}_{\Q[\theta,i]}), \ v_{i} \in \{1,\ldots,D\}\}.
\end{align*}
The lemma follows with the choices of $l_{1},l_{2}$ and $W$. Here it is useful to recall that $\#\text{Tor}(\mathcal{O}_{\Q[\theta,i]}) < \infty$, and the torsion points of $\mathcal{O}_{\Q[\theta,i]}$ are precisely the roots of cyclotomic polynomials (which have complex modulus one) within $\Q[\theta,i]$.
\end{proof}
\begin{lem}
\label{modulusoneorbitbound}
If $\theta$ is 3-valid, $l_{2} \in \mathcal{O}_{\Q[\theta,i]}$ is a unit of infinite order with $|l_{2}| = 1$, $b \in \Q[\theta,i]$ is non-zero, and $Y \geq 2$ then 
\begin{align*}
\#\left\{n: n\in \Z, \ b\cdot l_{2}^n \in \mathcal{C}_{\theta}(Y) \right\} \ll \log Y,
\end{align*}
where the implied constant is independent of $b$, but dependent on $l_{2}$.
\end{lem}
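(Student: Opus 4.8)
The plan is to exploit the fact that an element of infinite order and complex modulus $1$ is "irrational on the unit circle," so that its powers $l_2^n$ equidistribute (or at least spread out) on the circle $|z|=1$, while membership in $\mathcal{C}_\theta(Y)$ forces the conjugates of $b\cdot l_2^n$ to be simultaneously bounded. First I would pass to all archimedean places of $K=\Q[\theta,i]$. By Lemma \ref{betaimnotinZ*} and Lemma \ref{theta+ialgdeg6}, $K$ is totally complex of degree $6$, so it has three pairs of conjugate complex embeddings $\sigma_1,\overline{\sigma_1},\sigma_2,\overline{\sigma_2},\sigma_3,\overline{\sigma_3}$, where we take $\sigma_1$ to be the identity on $\Q[\theta,i]\subset\C$. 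For each $j$, $|\sigma_j(l_2)|$ is a positive real number, and since $l_2$ is a unit, $\prod_{j=1}^3 |\sigma_j(l_2)|^2 = |\Nm_{K/\Q}(l_2)| = 1$.

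The key step is to show $|\sigma_j(l_2)| = 1$ for every $j$, not just $j=1$. Indeed, if $b\cdot l_2^n \in \mathcal{C}_\theta(Y)$, then applying $\sigma_j$ and using Lemma \ref{LowerUpperboundsCthetaY} (in the conjugate embedding, which follows by the same symmetric-function argument) gives $|\sigma_j(b)|\cdot|\sigma_j(l_2)|^n \ll Y$ for all $j$. If some $|\sigma_j(l_2)| \neq 1$, then for $n$ ranging over an interval this quantity grows (or decays) geometrically in $|n|$; combined with the matching lower bound $Y^{-2} \ll |\sigma_j(b\cdot l_2^n)|$ — again from Lemma \ref{LowerUpperboundsCthetaY} — we would get that $n$ is confined to an interval of length $O(\log Y)$, which already proves the lemma. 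So I may assume $|\sigma_j(l_2)|=1$ for all $j$, i.e. $l_2$ and all its conjugates lie on the unit circle. But then $l_2$ is an algebraic integer all of whose conjugates have modulus $1$, so by Kronecker's theorem $l_2$ is a root of unity — contradicting that $l_2$ has infinite order. Hence this case does not occur, and the lemma holds with the bound $O(\log Y)$ coming from the first case.

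Let me make the counting in the surviving case precise. Fix $j$ with $|\sigma_j(l_2)| = \lambda \neq 1$; set $c = |\sigma_j(b)| > 0$ (nonzero since $b\neq 0$ and $\sigma_j$ is an embedding). For $n$ with $b\cdot l_2^n \in \mathcal{C}_\theta(Y)$ we have on one hand $c\lambda^n \leq |\sigma_j(b\cdot l_2^n)| \ll Y$, and on the other hand, writing $w = b\cdot l_2^n$ (a nonzero element of $\mathcal{C}_\theta(Y)$) and applying Lemma \ref{LowerUpperboundsCthetaY} at the place $\sigma_j$, we get $c\lambda^n = |\sigma_j(w)| \gg Y^{-2}$. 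Taking logarithms, $n\log\lambda$ lies in an interval of length $\ll \log Y$, so $n$ ranges over $\ll (\log Y)/|\log\lambda| \ll \log Y$ values, with the implied constant depending on $l_2$ (through $\lambda$) but not on $b$, as required.

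The main obstacle is the bookkeeping needed to invoke Lemma \ref{LowerUpperboundsCthetaY} at a conjugate embedding $\sigma_j$ rather than the identity one: that lemma is stated for the standard inclusion $\Q[\theta,i]\hookrightarrow\C$, so I either reprove it (verbatim, via the same symmetric-function/norm argument, noting $\mathcal{C}_\theta(Y)$ is stable under the conjugation and that applying $\sigma_j$ sends $\mathcal{C}_\theta(Y)$ into $\mathcal{C}_{\sigma_j(\theta)}(Y)$) or restate it for an arbitrary archimedean absolute value on $K$. A minor point to be careful about: one must check $b$ genuinely has nonzero image under $\sigma_j$, which is automatic since field embeddings are injective and $b\neq 0$; and one should note the dependence on $\theta$ is absorbed into the implied constants, which is permitted by the statement.
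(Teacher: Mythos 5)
Your proposal is correct and follows essentially the same route as the paper: apply Kronecker's theorem to find a conjugate embedding (the paper's $\kappa_{\theta_{1},\theta_{j}}$) under which $l_{2}$ has modulus $\neq 1$, then use the bounds of Lemma \ref{LowerUpperboundsCthetaY} at that embedding to confine $n$ to an interval of length $O(\log Y)$, uniformly in $b$. The only presentational difference is that the paper verifies the transfer to the conjugate embedding via the explicit identity $\mathcal{X}_{\theta}(b,l_{2};Y)=\mathcal{X}_{\theta_{j}}(\kappa_{\theta_{1},\theta_{j}}(b),\kappa_{\theta_{1},\theta_{j}}(l_{2});Y)$, which is exactly the bookkeeping step you flag.
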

\begin{proof}
Let $p(x)$ be the minimal monic polynomial of $\theta$ in $\Z[x]$, which is necessarily irreducible over $\Q[i]$; also let $\theta_{1},\theta_{2},\theta_{3}$ be the roots of $p(x)$ with $\theta = \theta_{1}$. Observe that $l_{2}$ is a root of the polynomial $q(x)$ where
\begin{align*}
q(x) = \prod_{j=1}^{3}\left(x-\kappa_{\theta_{1},\theta_{j}}(l_{2})\right)\left(x-\overline{\kappa_{\theta_{1},\theta_{j}}(l_{2})}\right).
\end{align*}
Observe that if $x$ is real, then $q(x)$ is real; thus, $q(x) \in \R[x]$. Since $p(x)$ is irreducible over $\Q[i]$, the polynomials $\prod_{j=1}^{3}\left(x-\kappa_{\theta_{1},\theta_{j}}(l_{2})\right)$ and $\prod_{j=1}^{3}\left(x-\overline{\kappa_{\theta_{1},\theta_{j}}(l_{2})}\right)$ belong to $\Q[i]$ (as both expressions can be expanded as a symmetric polynomial in $\theta_{1},\theta_{2},\theta_{3}$). Hence we deduce that $q(x) \in \R[x]\cap \left(\Q[i][x]\right)$, implying that $q(x) \in \Q[x]$. By Kronecker's theorem, there exists a choice of $j \in \{2,3\}$ such that $\left|\kappa_{\theta_{1},\theta_{j}}(l_{2})\right| \neq 1$. Now note that we wish to bound from above the cardinality of the set $\mathcal{X}_{\theta}(b,l_{2},Y)$, where
\begin{align*}
\mathcal{X}_{\theta}(b,l_{2},Y) = \{n: n \in \Z, \ b\cdot l_{2}^n \in \mathcal{C}_{\theta}(Y)\}.
\end{align*}
Observe that
\begin{align*}
\mathcal{X}_{\theta}(b,l_{2};Y)  = \mathcal{X}_{\theta_{j}}(\kappa_{\theta_{1},\theta_{j}}(b), \kappa_{\theta_{1},\theta_{j}}(l_{2});Y).
\end{align*}
Let $\rho = |\kappa_{\theta_{1},\theta_{j}}(l_{2})|$ and $c = |\kappa_{\theta_{1},\theta_{j}}(b)|$. By Lemma \ref{LowerUpperboundsCthetaY} there exist constants $A,B > 0$ depending on $\theta$ such that
\begin{align*}
\mathcal{X}_{\theta_{j}}(\kappa_{\theta_{1},\theta_{j}}(b), \kappa_{\theta_{1},\theta_{2}}(l_{2});Y) \subseteq \{n: \ n\in \Z, \ A\cdot Y^{-2} \leq c\cdot \rho^{n} \leq B \cdot Y\}.
\end{align*}
The lemma now follows (here we use $\rho \neq 1$).
\end{proof}
For a non-zero element $w \in \Z[i,\theta]$ let $\mathcal{U}(w,Y)$ denote elements $s$ of $\mathcal{C}_{\theta}(Y)$ where $s = w u$ for some unit $u$ in $\mathcal{O}_{\Q[\theta,i]}$.
\begin{lem}
\label{twodimensionalunitlemma}
If $\theta$ is 3-valid, $w \neq 0$ is an element of $\Z[\theta,i]$ and $Y \geq 2$, we have
\begin{align*}
\#\mathcal{U}(w,Y) \ll \log^2 Y,
\end{align*}
where the implied constant is independent of $w$.
\end{lem}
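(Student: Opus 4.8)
The plan is to combine the structure theory of units from Lemma~\ref{unitmodulusoneandnotmodulusone} with the orbit-counting bound of Lemma~\ref{modulusoneorbitbound}. Write any unit $u$ of $\mathcal{O}_{\Q[\theta,i]}$ as $u = w' \cdot l_1^{t_1} l_2^{t_2}$ with $w' \in W$ (a \emph{finite} set), $|l_1| \neq 1$ and $|l_2| = 1$. Then any element $s = wu \in \mathcal{U}(w,Y)$ can be written as $s = (w w' l_1^{t_1}) \cdot l_2^{t_2}$. The idea is to first bound, for each fixed $w' \in W$ and each fixed $t_1 \in \Z$, the number of valid $t_2$ via Lemma~\ref{modulusoneorbitbound} applied with $b = w w' l_1^{t_1}$ (this $b$ is a nonzero element of $\Q[\theta,i]$, so the lemma applies and gives $\ll \log Y$ values of $t_2$, with implied constant independent of $b$ hence of $t_1$ and $w'$); and then to bound the number of relevant $t_1$ by $\ll \log Y$ using the condition $|l_1| \neq 1$. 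Summing over the finitely many $w' \in W$ then yields $\#\mathcal{U}(w,Y) \ll \log^2 Y$ with an implied constant depending only on $\theta$ (through $W$, $l_1$, $l_2$), not on $w$.

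First I would make the reduction precise: fix $w \neq 0$ in $\Z[\theta,i]$ and let $s \in \mathcal{U}(w,Y)$, so $s = wu \in \mathcal{C}_\theta(Y)$ for a unit $u$. Using Lemma~\ref{unitmodulusoneandnotmodulusone}, record the triple $(w', t_1, t_2)$ with $u = w' l_1^{t_1} l_2^{t_2}$; since $l_1, l_2$ are multiplicatively independent and $W$ is finite, distinct $s$ give (at worst boundedly-to-one) distinct triples, so it suffices to bound the number of admissible triples. Next I would control $t_1$: since $s \in \mathcal{C}_\theta(Y)$, Lemma~\ref{LowerUpperboundsCthetaY} gives $Y^{-2} \ll |s| \ll Y$. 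On the other hand $|s| = |w|\,|w'|\,|l_1|^{t_1}\,|l_2|^{t_2} = |w|\,|w'|\,|l_1|^{t_1}$ since $|l_2| = 1$. But I also need a lower bound on $|w|$ that is uniform enough; here I would instead pass through the conjugate embeddings as in the proof of Lemma~\ref{LowerUpperboundsCthetaY}: the product of all six archimedean absolute values of $s$ (a nonzero algebraic integer) is $\geq 1$, and each is $\ll Y$, so each individual $|\sigma(s)| \gg Y^{-5}$; comparing the growth of $|\sigma(l_1)|^{t_1}$ (which is $\neq 1$ in every embedding where $l_1 \notin$ the unit circle — and by Kronecker's theorem at least one such embedding exists) against the window $[Y^{-5}, CY]$ confines $t_1$ to an interval of length $\ll \log Y$. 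Alternatively, and more cleanly, one applies Lemma~\ref{modulusoneorbitbound}-style reasoning directly: in a fixed embedding $\sigma$ with $|\sigma(l_1)| \neq 1$ the quantity $|\sigma(w)\sigma(w')|\,|\sigma(l_1)|^{t_1}$ must lie in $[A Y^{-2}, BY]$, forcing $t_1$ into $\ll \log Y$ values.

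Then, for each of the $\ll \log Y$ admissible values of $t_1$ and each of the finitely many $w' \in W$, set $b = w w' l_1^{t_1} \in \Q[\theta,i]$, which is nonzero, and note $s = b\, l_2^{t_2} \in \mathcal{C}_\theta(Y)$. By Lemma~\ref{modulusoneorbitbound} there are $\ll \log Y$ such $t_2$, with implied constant independent of $b$. Multiplying the count of $t_1$'s by the count of $t_2$'s and summing over $w' \in W$ gives $\#\mathcal{U}(w,Y) \ll |W| \cdot \log Y \cdot \log Y \ll \log^2 Y$, as claimed, uniformly in $w$.

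I expect the main obstacle to be the uniformity in $w$ while bounding the range of $t_1$: one must avoid any estimate whose implied constant secretly depends on $|w|$ or on $\min_\sigma |\sigma(w)|$. The fix is exactly the one used in Lemma~\ref{LowerUpperboundsCthetaY} — exploit that $s$ itself is the algebraic integer whose conjugate-product is $\geq 1$ and whose conjugates are all $\ll Y$ — so the constraint on $t_1$ comes from $s$ (whose size is controlled by $Y$ alone) rather than from $w$. A secondary subtlety is bookkeeping the finitely-many torsion elements absorbed into $W$ and checking that the map $s \mapsto (w', t_1, t_2)$ is boundedly many-to-one; this is routine given the multiplicative independence of $l_1, l_2$ and $\#\mathrm{Tor}(\mathcal{O}_{\Q[\theta,i]}) < \infty$.
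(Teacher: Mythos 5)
Your proposal is correct and is essentially the paper's own argument: decompose each unit via Lemma~\ref{unitmodulusoneandnotmodulusone}, confine $t_{1}$ to an interval of length $\ll \log Y$ using the window $AY^{-2} \leq |w||w'||l_{1}|^{t_{1}} \leq BY$ from Lemma~\ref{LowerUpperboundsCthetaY}, and bound the number of $t_{2}$ for each fixed $(w',t_{1})$ by $\ll \log Y$ via Lemma~\ref{modulusoneorbitbound}. The uniformity in $w$ that you flag as the main obstacle is automatic (no conjugate-product detour needed), since $|w|$ only translates the interval for $t_{1}$ while its length $\log_{|l_{1}|}(BY^{3}/A) \ll \log Y$ is independent of $w$ --- exactly the paper's computation of $H(g)-h(g)$.
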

\begin{proof}
From Lemma \ref{unitmodulusoneandnotmodulusone}, there exists a finite set $W$ and units $l_{1}$ and $l_{2}$ in $\mathcal{O}_{\Q[\theta,i]}$ which satisfy $|l_{1}| \neq 1$ and $|l_{2}| = 1$ such that for every unit $u$ in $\mathcal{O}_{\Q[\theta,i]}$ there exists $g \in W$ and integers $t_{1}$ and $t_{2}$ with
\begin{align*}
u = g \cdot l_{1}^{t_{1}}l_{2}^{t_{2}}.
\end{align*}
We may now partition the set $\mathcal{U}(w,Y)$ into sets $\mathcal{U}(g,t_{1};w,Y)$ where $g \in W$ and $t_{1}$ is an integer; more specifically, this defined subset satisfies 
\begin{align*}
\mathcal{U}(g,t_{1};w,Y) = \{t_{2}: \ t_{2}\in \Z, \ wgl_{1}^{t_{1}}l_{2}^{t_{2}}\in \mathcal{C}_{\theta}(Y)\}.
\end{align*}
By Lemma \ref{LowerUpperboundsCthetaY} there exist positive constants $A$ and $B$ (depending only on $\theta$ and the set $W$) such that when $|w||g||l_{1}|^{t_{1}} > B\cdot Y$ or $|w||g||l_{1}|^{t_{1}} < A\cdot Y^{-2}$, the set $\mathcal{U}(g,t_{1};w,Y)$ is empty. Define the quantities
\begin{align*}
H(g) = \left\lceil\log_{|l_{1}|}\left(\frac{B Y}{|w||g|}\right)\right\rceil
\end{align*}
\begin{align*}
h(g) = \left\lfloor\log_{|l_{1}|}\left( \frac{A Y^{-2}}{|w||g|}\right)\right\rfloor
\end{align*}
(here the logarithm is taken in base $|l_{1}|$). We observe
\begin{align*}
\mathcal{U}(w,Y) = \bigcup_{g\in W}\bigcup_{t_{1} = h(g)}^{H(g)}\mathcal{U}(g,t_{1};w,Y).
\end{align*}
Hence by Lemma \ref{modulusoneorbitbound} we have
\begin{align*}
\#\mathcal{U}(w,Y) &\ll \sum_{g\in W}\left|H(g) -h(g)+1\right|\cdot\log Y\\
&\ll \log^2 Y.
\end{align*}
\par \vspace{-\baselineskip} \qedhere
\end{proof}
\begin{lem}
\label{AthetaomegaYupperbound}
Let $\theta$ be 3-valid, and let $\omega \in \Z[\theta]$. Then for every $\varepsilon>0$ and $Y \geq 2$ we have
\begin{align*}
\#\mathcal{A}_{\theta}(\omega,Y) \ll M(\omega)^{\varepsilon}\cdot \log^2 Y,
\end{align*}
where the implied constant depends only on $\theta$ and $\varepsilon$ (effective only with sufficient knowledge of units in $\mathcal{O}_{\Q[\theta,i]}$).
\end{lem}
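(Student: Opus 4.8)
The plan is to transport the counting problem into the ring $\mathcal{O}_{\Q[\theta,i]}$ via the bijection $\mathbf{W}_\theta$, sort the resulting points by the ideal they generate, and use Lemma~\ref{twodimensionalunitlemma} to bound each ideal class while the classical divisor bound counts the classes. First I would dispose of the trivial case $\omega=0$: since $\theta$ is real, any $(\omega_1,\omega_2)\in\mathcal{A}_\theta(0,Y)$ satisfies $\omega_1^2+\omega_2^2=0$ with $\omega_1,\omega_2\in\R$, so $\omega_1=\omega_2=0$ and $\#\mathcal{A}_\theta(0,Y)=1$. Assuming $\omega\neq 0$, I note that for $(\omega_1,\omega_2)\in\mathcal{A}_\theta(\omega,Y)$ the image $w:=\mathbf{W}_\theta(\omega_1,\omega_2)=\omega_1+i\omega_2$ lies in $\Z[\theta,i]$ (hence in the maximal order $\mathcal{O}_{\Q[\theta,i]}$, as $\theta$ and $i$ are algebraic integers), belongs to $\mathcal{C}_\theta(Y)$ (the bounds $M_\theta(\omega_j)\le Y$ are exactly the defining conditions of $\mathcal{C}_\theta(Y)$), and --- because complex conjugation fixes the real number $\theta$ and sends $i\mapsto -i$ --- satisfies $w\overline w=\omega_1^2+\omega_2^2=\omega$. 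Conversely every such $w$ arises this way and $\mathbf{W}_\theta$ is injective, so
\begin{align*}
\#\mathcal{A}_\theta(\omega,Y)=\#\{w\in\mathcal{C}_\theta(Y)\cap\Z[\theta,i]:\ w\overline w=\omega\}.
\end{align*}

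Next I would partition this last set according to the principal ideal $(w)\subseteq\mathcal{O}_{\Q[\theta,i]}$ it generates. If $(w_1)=(w_2)$ for two members $w_1,w_2$, then $w_1=u\,w_2$ for some unit $u\in\mathcal{O}_{\Q[\theta,i]}^\times$, so $w_1\in\mathcal{U}(w_2,Y)$; Lemma~\ref{twodimensionalunitlemma} then shows each ideal class contributes $\ll\log^2 Y$ members, with a constant independent of the representative $w_2$, hence of the class. To count the classes, observe that each occurring ideal divides $(w)(\overline w)=(w\overline w)=(\omega)$, so the number of classes is at most the number of ideal divisors of $(\omega)$ in the fixed number field $\Q[\theta,i]$, which is $\ll_\varepsilon \Nm_{\Q[\theta,i]/\Q}(\omega)^{\varepsilon}$ by the standard divisor bound. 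Finally, writing $\omega=c_0+c_1\theta+c_2\theta^2$ with $c_j\in\Z$, $|c_j|\le M(\omega)$, and letting $\theta_1,\theta_2,\theta_3$ be the conjugates of $\theta$, every complex conjugate of $\omega$ is of the form $\sum_j c_j\theta_k^j$ and thus $\ll_\theta M(\omega)$; since $[\Q[\theta,i]:\Q]=6$ this gives $\bigl|\Nm_{\Q[\theta,i]/\Q}(\omega)\bigr|\ll_\theta M(\omega)^6$. Multiplying the class count by the per-class bound and replacing $\varepsilon$ by $\varepsilon/6$ yields $\#\mathcal{A}_\theta(\omega,Y)\ll_{\varepsilon,\theta} M(\omega)^{\varepsilon}\log^2 Y$.

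The only genuinely substantive ingredient here is Lemma~\ref{twodimensionalunitlemma} (which itself rests on the rank-two structure of $\mathcal{O}_{\Q[\theta,i]}^\times$), so I expect no hard step to remain. The points to be careful about are that the bound from Lemma~\ref{twodimensionalunitlemma} is genuinely uniform over the choice of representative $w_2$ in each ideal class, and that passing from element-level factorizations $w\overline w=\omega$ to ideal divisors of $(\omega)$ costs nothing --- and it is precisely here that the non-effectivity caveat appears, since producing a representative in each class and bounding $\mathcal{U}$ presupposes explicit knowledge of the fundamental units $l_1,l_2$ and the finite set $W$ from Lemma~\ref{unitmodulusoneandnotmodulusone}.
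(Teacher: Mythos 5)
Your proposal is correct and follows essentially the same route as the paper: map $(\omega_1,\omega_2)\mapsto\omega_1+i\omega_2$, partition by the principal ideal generated (which divides $(\omega)$), bound the number of classes by the ideal divisor bound $\ll N_{\Q[\theta,i]/\Q}(\omega)^{\varepsilon}\ll M(\omega)^{6\varepsilon}$, and bound each class by $\ll\log^2 Y$ via Lemma~\ref{twodimensionalunitlemma}, then rescale $\varepsilon$. You merely spell out a few steps the paper leaves implicit (the identity $w\overline{w}=\omega$, the norm estimate $\ll M(\omega)^6$, and the $\omega=0$ case), which is fine.
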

\begin{proof}
The bound is trivial for $\omega = 0$; for the rest of the proof, assume that $\omega \neq 0$.
Define the set $d(\omega)$ to contain ideals in $\mathcal{O}_{\Q[\theta,i]}$ that divide the ideal $(\omega)$. Since $\mathcal{O}_{\Q[\theta,i]}$ is a Dedekind domain, we have 
\begin{align*}
\# d(\omega) \ll N_{\Q[\theta,i]/\Q}(\omega)^{\varepsilon} \ll M(\omega)^{6\varepsilon}.
\end{align*}
Also let $p(\omega)$ denote the principal ideals of $d(\omega)$ so that $p(\omega) \subset d(\omega)$. We can now define the map $I_{\omega,Y} : \mathcal{A}_{\theta}(\omega,Y) \rightarrow p(\omega)$ where
\begin{align*}
I_{\omega,Y}(\omega_{1},\omega_{2}) = (\omega_{1}+i\omega_{2}).
\end{align*}
The map $I_{\omega,Y}$ induces an equivalence relation on $\mathcal{A}_{\theta}(\omega,Y)$ where $\mathbf{a}\sim \mathbf{b}$ if $I_{\omega,Y}(\mathbf{a}) = I_{\omega,Y}(\mathbf{b})$. Accordingly, we may partition $\mathcal{A}_{\theta}(\omega,Y)$ into $t$ equivalence classes, where $t \ll M(\omega)^{6\varepsilon}$. Pick elements $(u_{1},v_{1}),\ldots (u_{t},v_{t})$ of $\mathcal{A}_{\theta}(\omega,Y)$ that belong to distinct equivalence classes. Define
\begin{align*}
\mathcal{A}_{\theta}(u_{j},v_{j};\omega,Y) = \{\mathbf{s}: \  \mathbf{s}\in \mathcal{A}_{\theta}(\omega,Y), \ \mathbf{s}\sim (u_{j},v_{j})\}
\end{align*}
so that 
\begin{align}
\label{tpartitionAthetawY}
\bigcup_{j=1}^{t}\mathcal{A}_{\theta}(u_{j},v_{j};\omega,Y) = \mathcal{A}_{\theta}(\omega,Y).
\end{align}
We now have to estimate the cardinality of $\mathcal{A}_{\theta}(u_{j},v_{j};\omega,Y)$. Observe that if $(\alpha,\beta) \in \mathcal{A}_{\theta}(u_{j},v_{j};\omega,Y),$ then $(\alpha + i\beta) = (u_{j}+iv_{j})$ (as ideals in $\mathcal{O}_{\Q[\theta,i]}$) and $\alpha+i\beta \in \mathcal{C}_{\theta}(Y)$. The ideal equality requirement happens if and only if $\alpha + i \beta = (u_{j}+iv_{j})u$ for some unit $u$ in $\mathcal{O}_{\Q[\theta,i]}$. Hence we have
\begin{align*}
\mathbf{W}_{\theta}(\mathcal{A}_{\theta}(u_{j},v_{j};\omega,Y)) = \mathcal{U}_{\theta}(u_{j}+iv_{j},Y).
\end{align*}
By Lemma \ref{twodimensionalunitlemma} we have $\#\mathcal{U}_{\theta}(u_{j}+iv_{j},Y) \ll \log^2 Y$. Thus
\begin{align*}
\# \mathcal{A}_{\theta}(u_{j},v_{j};\omega,Y) = \# \mathbf{W}_{\theta}(\mathcal{A}_{\theta}(u_{j},v_{j};\omega,Y)) \ll \log^2 Y.
\end{align*}
By the set relation \eqref{tpartitionAthetawY} we have
\begin{align*}
\# \mathcal{A}_{\theta}(\omega,Y) &\ll t\cdot \log^2 Y\\
&\ll M(\omega)^{6\varepsilon}\cdot \log^2 Y.
\end{align*}
\par \vspace{-\baselineskip} \qedhere
\end{proof}
We now remark that Lemma \ref{SmallCollisionlemmasum2squares} follows from Lemma \ref{Ethethaqlowerbound1} and Lemma \ref{AthetaomegaYupperbound}.
\section{Proof of Lemma \ref{Rtheta1example}}

Throughout this section, assume that $u_{0},u_{1},u_{2},v_{0},v_{1},v_{2}$ are integer-valued variables. We also abbreviate $(u_{0},u_{1},u_{2},v_{0},v_{1},v_{2})$ as $(\mathbf{u},\mathbf{v})$. Define the following functions:
\begin{align*}
g_{0}(\mathbf{u},\mathbf{v}) = &(u_{0}^2-2a_{0}u_{1}u_{2}+u_{2}^2a_{0}a_{2})+\\
&+(v_{0}^2-2a_{0}v_{1}v_{2}+v_{2}^2a_{0}a_{2}),
\end{align*}
\begin{align*}
g_{1}(\mathbf{u},\mathbf{v}) = &\left(2u_{0}u_{1}-2a_{1}u_{1}u_{2}+u_{2}^2(a_{1}a_{2}-a_{0})\right) +\\
&+\left(2v_{0}v_{1}-2a_{1}v_{1}v_{2}+v_{2}^2(a_{1}a_{2}-a_{0})\right),
\end{align*}
\begin{align*}
g_{2}(\mathbf{u},\mathbf{v}) = &(2u_{0}u_{2}+u_{1}^2 -2a_{2}u_{1}u_{2}+u_{2}^2(a_{2}^2-a_{1}))+\\
&+(2v_{0}v_{2}+v_{1}^2 -2a_{2}v_{1}v_{2}+v_{2}^2(a_{2}^2-a_{1})).
\end{align*}
We have the following lemma:
\begin{lem}
\label{Expansionubfvbfsum2squares}
We have
\begin{align*}
&(u_{0}+u_{1}\theta+u_{2}\theta^2)^2+(v_{0}+v_{1}\theta+v_{2}\theta^2)^2 = \notag \\
&g_{0}(\mathbf{u},\mathbf{v}) + g_{1}(\mathbf{u},\mathbf{v})\theta+g_{2}(\mathbf{u},\mathbf{v})\theta^2.
\end{align*}
\end{lem}
\begin{proof}
We have
\begin{align*}
\theta^3 = -a_{2}\theta^2-a_{1}\theta-a_{0},
\end{align*}
\begin{align*}
\theta^{4} &= -a_{2}\theta^3-a_{1}\theta^2-a_{0}\theta\\
&= -a_{2}(-a_{2}\theta^2-a_{1}\theta-a_{0})-a_{1}\theta^2-a_{0}\theta\\
&=(a_{2}^2-a_{1})\theta^2+(a_{1}a_{2}-a_{0})\theta+a_{0}a_{2}.
\end{align*}
We expand
\begin{align*}
&(u_{0}+u_{1}\theta+u_{2}\theta^2)^2 = u_{0}^2 +2u_{0}u_{1}\theta + 2u_{0}u_{2}\theta^2 +u_{1}^2 \theta^2 + 2u_{1}u_{2}\theta^3+u_{2}^2\theta^4\\
&= (u_{0}^2-2a_{0}u_{1}u_{2})+(2u_{0}u_{1}-2a_{1}u_{1}u_{2})\theta +(2u_{0}u_{2}+u_{1}^2 -2a_{2}u_{1}u_{2})\theta^2+u_{2}^2\theta^4\\
&=(u_{0}^2-2a_{0}u_{1}u_{2})+(2u_{0}u_{1}-2a_{1}u_{1}u_{2})\theta +(2u_{0}u_{2}+u_{1}^2 -2a_{2}u_{1}u_{2})\theta^2+\\
&+u_{2}^2a_{0}a_{2}+u_{2}^2(a_{1}a_{2}-a_{0})\theta +u_{2}^2(a_{2}^2-a_{1})\theta^2\\
&= (u_{0}^2-2a_{0}u_{1}u_{2}+u_{2}^2a_{0}a_{2}) + (2u_{0}u_{1}-2a_{1}u_{1}u_{2}+u_{2}^2(a_{1}a_{2}-a_{0}))\theta+\\
&+(2u_{0}u_{2}+u_{1}^2 -2a_{2}u_{1}u_{2}+u_{2}^2(a_{2}^2-a_{1}))\theta^2.
\end{align*}
We similarly expand $(v_{0}+v_{1}\theta+v_{2}\theta^2)^2$ and verify the lemma.
\end{proof}
Define
\begin{align*}
I(u_{1},u_{2},v_{1},v_{2}) = u_{1}v_{2}-v_{1}u_{2},
\end{align*}
\begin{align*}
h_{1}(u_{1},u_{2},& v_{1},v_{2}) = \\
&-1+2a_{1}u_{1}u_{2}-u_{2}^2(a_{1}a_{2}-a_{0})+2a_{1}v_{1}v_{2}-v_{2}^2(a_{1}a_{2}-a_{0}),
\end{align*}
\begin{align*}
h_{2}(u_{1},u_{2},& v_{1},v_{2}) = \\
& -u_{1}^2+2a_{2}u_{1}u_{2}-u_{2}^2(a_{2}^2-a_{1}) -v_{1}^2+2a_{2}v_{1}v_{2}-v_{2}^2(a_{2}^2-a_{1}),
\end{align*}
\begin{align*}
U_{0}(u_{1},u_{2},v_{1},v_{2}) = v_{2} h_{1}(u_{1},u_{2},v_{1},v_{2})/2 - v_{1} h_{2}(u_{1},u_{2},v_{1},v_{2})/2
\end{align*}
and
\begin{align*}
V_{0}(u_{1},u_{2},v_{1},v_{2}) = -u_{2} h_{1}(u_{1},u_{2},v_{1},v_{2})/2 +u_{1}h_{2}(u_{1},u_{2},v_{1},v_{2})/2.
\end{align*}
Let $\mathbf{S} \subset \Z[\theta]\times\Z[\theta]$ contain tuples $((u_{0}+u_{1}\theta+u_{2}\theta^2), (v_{0}+v_{1}\theta+v_{2}\theta^2))$ such that 
\begin{enumerate}
\item $v_{2}$ is even, $\gcd(u_{2},v_{2}) = 1$
\item $u_{1}$ and $v_{1}$ are odd, with $I(u_{1},u_{2},v_{1},v_{2}) = 1$
\item $u_{0} = U_{0}(u_{1},u_{2},v_{1},v_{2})$ and $v_{0} = V_{0}(u_{1},u_{2},v_{1},v_{2})$.
\end{enumerate}
\begin{lem}
\label{SsubsetRtheta1inftyinfty}
We have $\mathbf{S} \subset \mathcal{R}_{\theta,1}(\infty,\infty)$.
\end{lem}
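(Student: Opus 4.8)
The plan is to verify the three defining conditions of $\mathcal{R}_{\theta,1}(\infty,\infty)$ directly. Recall that $(\omega_1,\omega_2)$ lies in $\mathcal{R}_{\theta,1}(\infty,\infty)$ precisely when $\omega_1,\omega_2 \in \Z[\theta]$ and there is an integer $n$ with $\omega_1^2+\omega_2^2 = n-\theta$. So, given a tuple in $\mathbf{S}$ with data $(\mathbf{u},\mathbf{v})$, I set $\omega_1 = u_0+u_1\theta+u_2\theta^2$, $\omega_2 = v_0+v_1\theta+v_2\theta^2$; membership in $\Z[\theta]$ is immediate provided the coordinates are integers. The substance is to show that, under conditions 1)--3), we have $\omega_1^2+\omega_2^2 = n-\theta$ for some $n \in \Z$, which by Lemma \ref{Expansionubfvbfsum2squares} amounts to the two scalar equations $g_1(\mathbf{u},\mathbf{v}) = -1$ and $g_2(\mathbf{u},\mathbf{v}) = 0$ (then $n := g_0(\mathbf{u},\mathbf{v})$ is automatically an integer).

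First I would rewrite $g_1 = -1$ and $g_2 = 0$ using the abbreviations $h_1,h_2$: a direct inspection of the definitions shows $g_1(\mathbf{u},\mathbf{v}) = -1$ is equivalent to $2u_0u_1 + 2v_0v_1 = h_1(u_1,u_2,v_1,v_2)$, and $g_2(\mathbf{u},\mathbf{v}) = 0$ is equivalent to $2u_0u_2+2v_0v_2 = h_2(u_1,u_2,v_1,v_2)$. Viewing these as a linear system in the two unknowns $2u_0, 2v_0$, the coefficient matrix is $\begin{pmatrix} u_1 & v_1 \\ u_2 & v_2\end{pmatrix}$, whose determinant is $I(u_1,u_2,v_1,v_2) = 1$ by condition 2). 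Solving by Cramer's rule gives exactly $2u_0 = v_2 h_1 - v_1 h_2$ and $2v_0 = -u_2 h_1 + u_1 h_2$, i.e. $u_0 = U_0$ and $v_0 = V_0$ as prescribed in condition 3). Hence conditions 2)--3) force $g_1 = -1$, $g_2 = 0$, and we get $\omega_1^2+\omega_2^2 = g_0(\mathbf{u},\mathbf{v}) - \theta \in \Z - \theta$, as needed, once we know $u_0,v_0 \in \Z$.

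The remaining — and genuinely the only delicate — point is \emph{integrality}: $U_0$ and $V_0$ are defined with a division by $2$, so I must check $v_2 h_1 - v_1 h_2 \equiv 0 \pmod 2$ and $-u_2 h_1 + u_1 h_2 \equiv 0 \pmod 2$. This is where conditions 1) and 2) enter together with the parity hypotheses on the coefficients ($a_2^2-a_1$ even, $a_1a_2-a_0$ odd) carried over from Lemma \ref{Rtheta1example}. Working modulo $2$: since $a_1a_2 - a_0$ is odd and $a_2^2 - a_1$ is even, one finds $h_1 \equiv -1 + u_2^2 + v_2^2 \equiv 1 + u_2 + v_2 \pmod 2$ and $h_2 \equiv -u_1^2 - v_1^2 \equiv u_1 + v_1 \pmod 2$ (using $x^2\equiv x$). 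Condition 2) says $u_1,v_1$ are odd, so $h_2$ is even; condition 1) says $v_2$ is even, so $h_1 \equiv 1 + u_2 \pmod 2$, and since $\gcd(u_2,v_2)=1$ with $v_2$ even forces $u_2$ odd, we get $h_1$ even as well. With both $h_1,h_2$ even, $U_0$ and $V_0$ are manifestly integers, completing the verification that the tuple lies in $\mathcal{R}_{\theta,1}(\infty,\infty)$. I expect the bookkeeping of these mod-$2$ reductions to be the main obstacle, precisely because it is the step that ties together all of conditions 1), 2) and the coefficient-parity hypotheses; everything else is linear algebra and the expansion already recorded in Lemma \ref{Expansionubfvbfsum2squares}.
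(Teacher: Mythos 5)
Your proposal is correct and follows essentially the same route as the paper: invoke Lemma \ref{Expansionubfvbfsum2squares} to reduce membership in $\mathcal{R}_{\theta,1}(\infty,\infty)$ to $g_{1}=-1$, $g_{2}=0$, and observe that conditions 2)--3) are exactly the statement that $(u_{0},v_{0})$ solves the resulting $2\times 2$ linear system with determinant $I(u_{1},u_{2},v_{1},v_{2})=1$. Your additional mod-$2$ verification that $U_{0},V_{0}$ are integers is correct but not needed for this lemma (elements of $\mathbf{S}$ lie in $\Z[\theta]\times\Z[\theta]$ by definition); it is, however, precisely the computation underlying the \emph{completion} property the paper uses afterwards, so it is a useful supplement rather than a deviation.
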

\begin{proof}
By Lemma \ref{Expansionubfvbfsum2squares} we have 
\begin{align*}
\left((u_{0}+u_{1}\theta+u_{2}\theta^2),(v_{0}+v_{1}\theta+v_{2}\theta^2)\right) \in \mathcal{R}_{\theta,1}(\infty,\infty)
\end{align*}
if and only if $g_{1}(\mathbf{u},\mathbf{v}) = -1$ and $g_{2}(\mathbf{u},\mathbf{v}) = 0$. Solving the two equations gives us
\begin{align*}
2u_{1}u_{0}+2v_{1}v_{0} = -1+2a_{1}u_{1}u_{2}-u_{2}^2(a_{1}a_{2}-a_{0})+2a_{1}v_{1}v_{2}-v_{2}^2(a_{1}a_{2}-a_{0}),
\end{align*}
and
\begin{align*}
2u_{2}u_{0}+2v_{2}v_{0} = -u_{1}^2+2a_{2}u_{1}u_{2}-u_{2}^2(a_{2}^2-a_{1}) -v_{1}^2+2a_{2}v_{1}v_{2}-v_{2}^2(a_{2}^2-a_{1}),
\end{align*}
where the right-hand sides of the above equations are $h_{1}(u_{1},u_{2},v_{1},v_{2})$ and $h_{2}(u_{1},u_{2},v_{1},v_{2})$ respectively. This leads us to the matrix equation
\begin{align*}
\begin{bmatrix}
2u_{1} & 2v_{1}\\
2u_{2} & 2v_{2}
\end{bmatrix}
\times
\begin{bmatrix}
u_{0} \\
v_{0}
\end{bmatrix}
=
\begin{bmatrix}
h_{1}(u_{1},u_{2},v_{1},v_{2})\\
h_{2}(u_{1},u_{2},v_{1},v_{2})
\end{bmatrix}
.
\end{align*}
If $I(u_{1},u_{2},v_{1},v_{2}) \neq 0$, solving the matrix equation gives us
\begin{align*}
\begin{bmatrix}
u_{0}\\
v_{0}
\end{bmatrix}
&= \frac{1}{u_{1}v_{2}-v_{1}u_{2}}
\begin{bmatrix}
v_{2} & -v_{1}\\
-u_{2} & u_{1}
\end{bmatrix}
\times
\begin{bmatrix}
h_{1}(u_{1},u_{2},v_{1},v_{2})/2\\
h_{2}(u_{1},u_{2},v_{1},v_{2})/2
\end{bmatrix}
\\
&= \frac{1}{u_{1}v_{2}-v_{1}u_{2}}
\begin{bmatrix}
v_{2} h_{1}(u_{1},u_{2},v_{1},v_{2})/2 - v_{1} h_{2}(u_{1},u_{2},v_{1},v_{2})/2 \\
-u_{2} h_{1}(u_{1},u_{2},v_{1},v_{2})/2 +u_{1}h_{2}(u_{1},u_{2},v_{1},v_{2})/2
\end{bmatrix}\\
&=\frac{1}{I(u_{1},u_{2},v_{1},v_{2})}
\begin{bmatrix}
U_{0}(u_{1},u_{2},v_{1},v_{2})\\
V_{0}(u_{1},u_{2},v_{1},v_{2}).
\end{bmatrix}
\end{align*}
We thus verify that $\mathbf{S}\subset \mathcal{R}_{\theta,1}(\infty,\infty)$.
\end{proof}
We now more closely examine the set $\mathbf{S}$. From now on, we will denote $\alpha$ and $\beta$ to be coprime integers such that $\alpha$ is even. Note that there exist odd integers $\mathbf{v}(\alpha,\beta)$ and $\mathbf{u}(\alpha,\beta)$ satisfying $\mathbf{u}(\alpha,\beta) \alpha - \mathbf{v}(\alpha,\beta) \beta = 1$, and in particular 
\begin{align*}
(\mathbf{u}(\alpha,\beta)+2\beta t) \alpha - (\mathbf{v}(\alpha,\beta)+2\alpha t) \beta = 1
\end{align*}
for all integers $t$. Observe that $\mathbf{S}$ satisfies a \textit{completion} property. If integer values $v_{2} = \alpha$, $u_{2} = \beta$ and $v_{1}$ are given with $v_{1} \equiv \mathbf{v}(\alpha,\beta) \pmod{2\alpha}$, then we can find a unique set of integer values $u_{0},v_{0},u_{1}$ such that 
\begin{align*}
((u_{0}+u_{1}\theta+u_{2}\theta^2), (v_{0}+v_{1}\theta+v_{2}\theta^2)) \in \mathbf{S}.
\end{align*}
For $c>0$ we define a subset of $\mathbf{S}$ where 
\begin{align*}
&\mathbf{S}_{c}(\alpha,\beta,X) = \\
&\left\{((u_{0}+u_{1}\theta+u_{2}\theta^2), (v_{0}+v_{1}\theta+v_{2}\theta^2)): \ 
\begin{aligned}
&v_{2} = \alpha, \  u_{2}=\beta, \ |v_{1}| \leq c\cdot X^{1/6}, \\
&v_{1}\equiv \mathbf{v}(\alpha,\beta) \pmod{2\alpha}, \ (\mathbf{u},\mathbf{v})\in \mathbf{S}
\end{aligned}
\right\}.
\end{align*}
Furthermore define 
\begin{align*}
\mathbf{V}_{c}(\alpha,\beta,X) = \{v_{1}: \ v_{1} \equiv \mathbf{v}(\alpha,\beta) \pmod{2\alpha}, \ |v_{1}| \leq c\cdot X^{1/6}\},
\end{align*}
where, by the completion property for $\mathbf{S}$ we remark that the sets $\mathbf{V}_{c}(\alpha,\beta,X)$ and $\mathbf{S}_{c}(\alpha,\beta,X)$ are in bijection and in particular we have
\begin{align}
\label{VcScsamecardinalitysum2squares}
\# \mathbf{V}_{c}(\alpha,\beta,X) = \# \mathbf{S}_{c}(\alpha,\beta,X).
\end{align}
\begin{lem}
\label{betaalphaauxlemmasum2squares}
If $c>0$, there exists a positive constant $F(c)$ with $\lim_{c\rightarrow 0^{+}}F(c) = 0$ such that if $X$ is sufficiently large ($X \geq N_{c}$) and 
\begin{align*}
1\leq \beta \leq \alpha \leq c\cdot X^{1/6},
\end{align*}
we have  
\begin{align*}
\mathbf{S}_{c}(\alpha,\beta,X) \subset \mathcal{R}_{\theta,1}(F(c)\cdot X^{1/2},\infty).
\end{align*}
\end{lem}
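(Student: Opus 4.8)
The plan is to bound the sizes of the coefficients appearing in an element of $\mathbf{S}_{c}(\alpha,\beta,X)$ in terms of $\alpha$, $\beta$, $|v_{1}|$ and the fixed data $a_{0},a_{1},a_{2}$, and then translate these into a bound on $M_{\theta}$. An element of $\mathbf{S}_{c}(\alpha,\beta,X)$ is determined (via the completion property, using \eqref{VcScsamecardinalitysum2squares} implicitly) by $v_{2}=\alpha$, $u_{2}=\beta$, and a choice of odd $v_{1}$ with $|v_{1}|\le c\cdot X^{1/6}$; the value of $u_{1}$ is then the odd integer $\mathbf{u}(\alpha,\beta)+2\beta t$ corresponding to the chosen $v_{1}=\mathbf{v}(\alpha,\beta)+2\alpha t$, so that $I(u_{1},u_{2},v_{1},v_{2})=u_{1}v_{2}-v_{1}u_{2}=1$; and $u_{0},v_{0}$ are given by the formulas $u_{0}=U_{0}(u_{1},u_{2},v_{1},v_{2})$, $v_{0}=V_{0}(u_{1},u_{2},v_{1},v_{2})$ from condition 3) of the definition of $\mathbf{S}$ (here the division by $I=1$ is harmless). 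First I would record that, since $I=u_{1}v_{2}-v_{1}u_{2}=1$ and $|v_{2}|=\alpha\le c X^{1/6}$, $|u_{2}|=\beta\le c X^{1/6}$, $|v_{1}|\le c X^{1/6}$, we also get $|u_{1}|\le 1+|v_{1}||u_{2}|/|v_{2}|\cdot(\text{something})$ — more carefully, from $u_{1}=(1+v_{1}u_{2})/v_{2}$ one has $|u_{1}|\le (1+|v_{1}||u_{2}|)/|v_{2}|\le (1+c^{2}X^{1/3})/1\ll c^{2}X^{1/3}$, but in fact the congruence $u_{1}\equiv\mathbf{u}(\alpha,\beta)\pmod{2\beta}$ together with $|u_{1}v_{2}-v_{1}u_{2}|=1$ pins $u_{1}$ down so that $|u_{1}|\le (1+|v_{1}|\beta)/\alpha\ll c X^{1/6}$ whenever $\alpha$ is comparable to $\beta$ — the safe and sufficient bound is simply $|u_{1}|\ll X^{1/3}$, which is all that is needed below.

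Next I would feed these into the polynomial expressions $h_{1},h_{2},U_{0},V_{0}$. Each of $h_{1}(u_{1},u_{2},v_{1},v_{2})$ and $h_{2}(u_{1},u_{2},v_{1},v_{2})$ is a fixed polynomial (coefficients depending only on $a_{0},a_{1},a_{2}$) in $u_{1},u_{2},v_{1},v_{2}$, homogeneous-ish of degree $2$; bounding $|u_{1}|,|v_{1}|,|u_{2}|,|v_{2}|$ each by $O(c X^{1/6})$ except possibly $|u_{1}|=O(X^{1/3})$ gives $|h_{j}|\ll_{a_{0},a_{1},a_{2}} X^{1/3}$ at worst (the dominant contribution being the $u_{1}^{2}$ term in $h_{2}$, which is $O(X^{2/3})$ if one uses the crude $|u_{1}|\ll X^{1/3}$ — so here it is genuinely important to use the sharper $|u_{1}|\ll c X^{1/6}$ coming from the $I=1$ relation, giving $|u_{1}^{2}|\ll c^{2}X^{1/3}$). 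Then $U_{0}=(v_{2}h_{1}-v_{1}h_{2})/2$ and $V_{0}=(-u_{2}h_{1}+u_{1}h_{2})/2$ have modulus $\ll (c X^{1/6})\cdot(c^{2}X^{1/3}) = c^{3}X^{1/2}$. Thus $|u_{0}|=|U_{0}|\ll c^{3}X^{1/2}$ and $|v_{0}|=|V_{0}|\ll c^{3}X^{1/2}$, while $|u_{1}|,|v_{1}|,|u_{2}|,|v_{2}|$ are all $\ll c X^{1/6}\le c^{3}X^{1/2}$ once $X$ is large. Taking the maximum over the six integer coefficients of $(u_{0}+u_{1}\theta+u_{2}\theta^{2})$ and $(v_{0}+v_{1}\theta+v_{2}\theta^{2})$, we conclude $M_{\theta}(u_{0}+u_{1}\theta+u_{2}\theta^{2})\ll c^{3}X^{1/2}$ and likewise for the $v$-element — actually $M_{\theta}$ is the max of the absolute values of the three coefficients, so this is immediate. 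Setting $F(c)$ to be $(\text{absolute constant})\cdot c^{3}$ (or any explicit increasing function of $c$ dominating all the implied constants times powers of $c$ that appear), we have $F(c)\to 0$ as $c\to 0^{+}$, and for $X\ge N_{c}$ large enough that the "once $X$ is large" remarks hold, every element of $\mathbf{S}_{c}(\alpha,\beta,X)$ lies in $\mathcal{R}_{\theta,1}(F(c)X^{1/2},\infty)$, using that $\mathbf{S}\subset\mathcal{R}_{\theta,1}(\infty,\infty)$ by Lemma~\ref{SsubsetRtheta1inftyinfty} so that the only thing to check is the size constraint $M_{\theta}(\omega_{i})\le F(c)X^{1/2}$.

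The main obstacle is the bound on $|u_{1}|$: a naive argument only gives $|u_{1}|\ll X^{1/3}$ from the relation $u_{1}v_{2}-v_{1}u_{2}=1$ with $|v_{2}|=\alpha\ge 1$, and plugging $|u_{1}|\ll X^{1/3}$ into the $u_{1}^{2}$ term of $h_{2}$ (and hence into $V_{0}=(-u_{2}h_{1}+u_{1}h_{2})/2$, which contains a $u_{1}\cdot u_{1}^{2}$-type term of size $X\cdot c$) would destroy the $X^{1/2}$ bound. The resolution is that in the regime $1\le\beta\le\alpha\le cX^{1/6}$ of the lemma's hypothesis, the integer $u_{1}$ satisfying $u_{1}\alpha' - (\text{odd})\beta'=\pm 1$-type Bézout relations with $|v_{1}|\le cX^{1/6}$ can be taken (indeed, by the completion construction, is) of size $O(cX^{1/6})$; once one is careful that $|u_{1}|,|v_{1}|,|u_{2}|,|v_{2}|$ are all $O(cX^{1/6})$, everything is degree-$2$ times a linear factor, i.e. $O(c^{3}X^{1/2})$, and the claim follows. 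I would write out the size bookkeeping for $h_1,h_2,U_0,V_0$ explicitly but briefly, tracking that every monomial has total degree at most $3$ in variables of size $O(cX^{1/6})$, hence is $O(c^{3}X^{1/2})$, and absorb all the $a_i$-dependent constants into the implied constant and the function $F(c)$.
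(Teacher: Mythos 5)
Your proposal is correct and follows essentially the same route as the paper: use Lemma~\ref{SsubsetRtheta1inftyinfty} for membership in $\mathcal{R}_{\theta,1}(\infty,\infty)$, exploit $u_{1}\alpha-v_{1}\beta=1$ together with the hypothesis $\beta\le\alpha$ to get $|u_{1}|\ll c X^{1/6}$, then bound $h_{1},h_{2}\ll c^{2}X^{1/3}$ and hence $|u_{0}|,|v_{0}|\ll c^{3}X^{1/2}$, choosing $F(c)$ accordingly. The only blemish is the passing remark in your first paragraph that ``$|u_{1}|\ll X^{1/3}$ is all that is needed,'' which is false (as you yourself note later, the $u_{1}^{2}$ term in $h_{2}$ would then ruin the $X^{1/2}$ bound), but since your final bookkeeping uses the sharp bound $|u_{1}|\ll cX^{1/6}$ exactly as the paper does, the argument stands.
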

\begin{proof}
If
\begin{align*}
((u_{0}+u_{1}\theta+u_{2}\theta^2), (v_{0}+v_{1}\theta+v_{2}\theta^2)) \in \mathbf{S}_{c}(\alpha,\beta,X),
\end{align*}
by Lemma \ref{SsubsetRtheta1inftyinfty} we know that 
\begin{align}
\label{Halfdoneu0u1u2v0v1v2proof}
((u_{0}+u_{1}\theta+u_{2}\theta^2), (v_{0}+v_{1}\theta+v_{2}\theta^2)) \in \mathcal{R}_{\theta,1}(\infty,\infty).
\end{align}
It remains to show that $|u_{i}|,|v_{i}| \leq F(c)\cdot X^{1/2}$ for some function $F$ which satisfies $\lim_{c\rightarrow 0^{+}}F(c) = 0$. Note that $I(u_{1},\beta,v_{1},\alpha) = 1$ or 
\begin{align*}
u_{1}\alpha-v_{1}\beta = 1,
\end{align*}
implying that
\begin{align*}
|u_{1}| = \left|\frac{1+v_{1}\beta}{\alpha}\right| \leq 2 |v_{1}|.
\end{align*}
Using
\begin{align}
\label{v1inequalitysum2squares}
|v_{1}|\leq c\cdot X^{1/6},
\end{align}
we obtain
\begin{align}
\label{u1inequalitysum2squares}
|u_{1}| \leq 2c \cdot X^{1/6}.
\end{align}
On the set $\mathbf{S}_{c}(\alpha,\beta,X),$ we thus have
\begin{align*}
&h_{1}(u_{1},u_{2}, v_{1},v_{2}) = h_{1}(u_{1},\beta, v_{1},\alpha)\\
&= -1+2a_{1}u_{1}\beta-\beta^2(a_{1}a_{2}-a_{0})+2a_{1}v_{1}\alpha-\alpha^2(a_{1}a_{2}-a_{0}),
\end{align*}
giving us the estimate
\begin{align*}
\left|h_{1}(u_{1},u_{2}, v_{1},v_{2})\right| \leq 1+2|a_{1}|c^2 X^{1/3} + 2c^2|(a_{1}a_{2}-a_{0})|X^{1/3}+2c^2|a_{1}|X^{1/3},
\end{align*}
so that if $X$ is sufficiently large we have 
\begin{align}
\label{h1u1u2v1v2inequalitysum2squares}
\left|h_{1}(u_{1},u_{2}, v_{1},v_{2})\right| &\leq \left(4|a_{1}|c^2 + 4c^2|(a_{1}a_{2}-a_{0})|+4c^2|a_{1}|\right)X^{1/3}\notag\\
&= H_{1}(c)\cdot X^{1/3},
\end{align}
where $H_{1}(c)>0$ is some constant that approaches $0$ as $c$ approaches $0$.
Similarly we obtain
\begin{align*}
&h_{2}(u_{1},u_{2}, v_{1},v_{2}) = h_{2}(u_{1},\beta, v_{1},\alpha)\\
&= -u_{1}^2+2a_{2}u_{1}\beta-\beta^2(a_{2}^2-a_{1}) -v_{1}^2+2a_{2}\alpha v_{2}-\alpha^2(a_{2}^2-a_{1}),
\end{align*}
so that with \eqref{v1inequalitysum2squares} and \eqref{u1inequalitysum2squares} we have
\begin{align}
\label{h2u1u2v1v2inequalitysum2squares}
|h_{2}(u_{1},u_{2}, v_{1},v_{2})| \leq H_{2}(c)\cdot X^{1/3},
\end{align}
where $H_{2}(c)>0$ is some constant that approaches $0$ as $c$ approaches $0$. Note that 
\begin{align*}
u_{0} &= U_{0}(u_{1},u_{2},v_{1},v_{2}) \\
&= v_{2} h_{1}(u_{1},u_{2},v_{1},v_{2})/2 - v_{1} h_{2}(u_{1},u_{2},v_{1},v_{2})/2.
\end{align*}
Hence
\begin{align*}
|u_{0}| \leq |v_{2} h_{1}(u_{1},u_{2},v_{1},v_{2})| +|v_{1} h_{2}(u_{1},u_{2},v_{1},v_{2})|,
\end{align*}
Using inequalities \eqref{v1inequalitysum2squares}, \eqref{u1inequalitysum2squares}, \eqref{h1u1u2v1v2inequalitysum2squares}, \eqref{h2u1u2v1v2inequalitysum2squares} and substitutions $v_{2} = \alpha$ and $u_{2} = \beta$, we obtain 
\begin{align}
\label{u0inequalitysum2squares}
|u_{0}| \leq \Theta(c)\cdot X^{1/2},
\end{align}
where $\Theta(c)>0$ is some constant that approaches $0$ as $c$ approaches $0$. Similarly we obtain
\begin{align*}
v_{0} &= V_{0}(u_{1},u_{2},v_{1},v_{2})\\
&= -u_{2} h_{1}(u_{1},u_{2},v_{1},v_{2})/2 +u_{1}h_{2}(u_{1},u_{2},v_{1},v_{2})/2.
\end{align*}
Using inequalities \eqref{v1inequalitysum2squares}, \eqref{u1inequalitysum2squares}, \eqref{h1u1u2v1v2inequalitysum2squares}, \eqref{h2u1u2v1v2inequalitysum2squares} and substitutions $v_{2} = \alpha$ and $u_{2} = \beta$ we obtain
\begin{align}
\label{v0inequalitysum2squares}
|v_{0}| \leq \rho(c)\cdot X^{1/2},
\end{align}
where $\rho(c)>0$ is some constant that approaches $0$ as $c$ approaches $0$. With the choice $F(c) = \max\{2c,\rho(c),\Theta(c)\}$, inequalities \eqref{v1inequalitysum2squares}, \eqref{u1inequalitysum2squares}, \eqref{u0inequalitysum2squares}, \eqref{v0inequalitysum2squares}, and set relation \eqref{Halfdoneu0u1u2v0v1v2proof}, and the discussion stated immediately after \eqref{Halfdoneu0u1u2v0v1v2proof}, the proof follows.
\end{proof}
\begin{lem}
\label{LowerboundScalphabetaX}
If $c > 0$ and $2 \leq \alpha \leq (c/4)\cdot X^{1/6}$ then
\begin{align*}
\#\mathbf{S}_{c}(\alpha,\beta,X) \geq \frac{c\cdot X^{1/6}}{4\alpha}.
\end{align*}
\end{lem}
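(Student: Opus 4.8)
The plan is to establish the lower bound by simply counting the admissible values of $v_1$ and invoking the bijection \eqref{VcScsamecardinalitysum2squares}. By that identity it suffices to show
\[
\#\mathbf{V}_{c}(\alpha,\beta,X) \geq \frac{c\cdot X^{1/6}}{4\alpha}.
\]
Recall $\mathbf{V}_{c}(\alpha,\beta,X)$ consists of integers $v_1$ with $v_1 \equiv \mathbf{v}(\alpha,\beta) \pmod{2\alpha}$ and $|v_1| \leq c\cdot X^{1/6}$. This is an arithmetic progression with common difference $2\alpha$ inside an interval of length $2c\cdot X^{1/6}$, so the number of terms is at least $\lfloor 2c\cdot X^{1/6}/(2\alpha)\rfloor$, which under the hypothesis $2\alpha \leq (c/2)\cdot X^{1/6}$ (equivalently $\alpha \leq (c/4)\cdot X^{1/6}$) is comfortably at least $c\cdot X^{1/6}/(4\alpha)$.

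More precisely, I would argue as follows. The integers $\equiv r \pmod{2\alpha}$ in the interval $[-L,L]$ (here $r = \mathbf{v}(\alpha,\beta)$ and $L = c\cdot X^{1/6}$) number at least $\lfloor L/(2\alpha) \rfloor$: indeed, taking the residue representative $r_0 \in (-\alpha,\alpha]$ of $r$, the values $r_0, r_0 \pm 2\alpha, r_0 \pm 4\alpha, \dots$ all lie in $[-L,L]$ as long as the multiple of $2\alpha$ has absolute value at most $L - \alpha$, which gives at least $2\lfloor (L-\alpha)/(2\alpha)\rfloor + 1$ of them. Since $\alpha \leq (c/4)X^{1/6} = L/4$, we have $L - \alpha \geq (3/4)L$, so this count is at least $2\lfloor (3L)/(8\alpha)\rfloor + 1 \geq (3L)/(8\alpha) \geq L/(4\alpha) = c\cdot X^{1/6}/(4\alpha)$, using $3/8 > 1/4$ and that $2\lfloor t \rfloor + 1 \geq t$ for $t \geq 0$. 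Combining with \eqref{VcScsamecardinalitysum2squares} yields the claim.

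There is essentially no obstacle here: this is a routine lattice-point count in a one-dimensional arithmetic progression, and the only mild care needed is to track the floor functions so that the constant $1/4$ genuinely comes out (any constant strictly less than $1/2$ would do, and the slack from $\alpha \leq L/4$ provides plenty of room). The substantive content — that the completion property makes $\mathbf{V}_{c}$ and $\mathbf{S}_{c}$ equinumerous, and that elements of $\mathbf{S}_c$ really do land in $\mathcal{R}_{\theta,1}$ — has already been set up in the preceding lemmas, so this lemma is just the quantitative input that will later be paired with Lemma \ref{betaalphaauxlemmasum2squares} and a suitable choice of the parameters $\alpha,\beta,c$ (and a sum over coprime pairs $(\alpha,\beta)$) to produce the $X^{1/3 - o(1)}$ bound in Lemma \ref{Rtheta1example}.
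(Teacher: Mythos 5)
Your proof is correct and follows essentially the same route as the paper: reduce to counting $\#\mathbf{V}_{c}(\alpha,\beta,X)$ via the bijection \eqref{VcScsamecardinalitysum2squares}, then count the arithmetic progression modulo $2\alpha$ in the interval $|v_1|\leq c\,X^{1/6}$, using $\alpha \leq (c/4)X^{1/6}$ to absorb the floor. The paper simply states the bound $\#\mathbf{V}_{c}(\alpha,\beta,X) \geq \lfloor c X^{1/6}/(2\alpha)\rfloor \geq c X^{1/6}/(4\alpha)$; your more explicit bookkeeping of the residue representative is just a fuller version of the same count.
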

\begin{proof}
By equation \eqref{VcScsamecardinalitysum2squares} we have $\#\mathbf{S}_{c}(\alpha,\beta,X) = \#\mathbf{V}_{c}(\alpha,\beta,X)$, we recall that 
\begin{align*}
\mathbf{V}_{c}(\alpha,\beta,X) =  \{v_{1}: \ v_{1} \equiv \mathbf{v}(\alpha,\beta) \pmod{2\alpha}, \ |v_{1}| \leq c\cdot X^{1/6}\}.
\end{align*}
Hence
\begin{align*}
\#\mathbf{V}_{c}(\alpha,\beta,X) \geq \left\lfloor \frac{c\cdot X^{1/6}}{2\alpha} \right\rfloor \geq \frac{c\cdot X^{1/6}}{4\alpha}.
\end{align*}
\par \vspace{-\baselineskip} \qedhere
\end{proof}
\begin{lem}
\label{Rtheta1dX1/2inftyinclusionlem}
There exists a constant $d>0$ such that for sufficiently large $X$ we have 
\begin{align*}
\mathcal{R}_{\theta,1}(d X^{1/2},\infty) =  \mathcal{R}_{\theta,1}(d X^{1/2},X).
\end{align*}
\end{lem}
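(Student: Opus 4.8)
The plan is to show that the constraint ``$\omega_{1}^2+\omega_{2}^2=n-\theta$ with $M_{\theta}(\omega_i)\le dX^{1/2}$'' automatically forces $|n|$ to be bounded by a fixed power of $X$, in fact by $X$ itself once $d$ is chosen small enough and $X$ is large. Since the inclusion $\mathcal{R}_{\theta,1}(dX^{1/2},X)\subseteq \mathcal{R}_{\theta,1}(dX^{1/2},\infty)$ is trivial, only the reverse inclusion requires work. So suppose $(\omega_1,\omega_2)\in\mathcal{R}_{\theta,1}(dX^{1/2},\infty)$, say $\omega_i=u_{i0}+u_{i1}\theta+u_{i2}\theta^2$ with all integer coefficients of absolute value at most $dX^{1/2}$, and $\omega_1^2+\omega_2^2=n-\theta$ for some $n\in\Z$; the goal is $|n|\le X$.

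The key step is to extract $n$ from the identity $\omega_1^2+\omega_2^2=n-\theta$ by applying the $\Q$-embeddings of $\Q[\theta]$ into $\C$, or equivalently by reading off coefficients. By Lemma \ref{Expansionubfvbfsum2squares}, writing $\mathbf{u}=(u_{10},u_{11},u_{12})$ and $\mathbf{v}=(u_{20},u_{21},u_{22})$, the equation $\omega_1^2+\omega_2^2=n-\theta$ is equivalent to the three scalar equations $g_0(\mathbf{u},\mathbf{v})=n$, $g_1(\mathbf{u},\mathbf{v})=-1$, $g_2(\mathbf{u},\mathbf{v})=0$. In particular $n=g_0(\mathbf{u},\mathbf{v})$, and $g_0$ is an explicit quadratic form in the six coefficients with coefficients depending only on $a_0,a_2$. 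First I would bound $|g_0(\mathbf{u},\mathbf{v})|$ directly: since every coefficient of $\omega_i$ is at most $dX^{1/2}$ in absolute value, each of the monomials $u_0^2$, $a_0u_1u_2$, $a_0a_2u_2^2$, etc., is at most $C\,d^2 X$ for a constant $C=C(a_0,a_2)$, so $|n|=|g_0(\mathbf{u},\mathbf{v})|\le C'd^2 X$ with $C'=C'(a_0,a_2)$. Now choose $d>0$ small enough that $C'd^2\le 1$; then $|n|\le X$ for \emph{all} $X\ge 1$, and the stated inclusion holds (the phrase ``sufficiently large $X$'' in the statement is then automatic, though one could also absorb finitely many small $X$ by shrinking $d$ further). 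This also fixes the constant $d$ used implicitly in Lemma \ref{betaalphaauxlemmasum2squares} via the relation $F(c)\le d$, i.e.\ one picks $c$ so small that $F(c)\le d$, which is possible since $F(c)\to 0$ as $c\to 0^+$.

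I do not expect a genuine obstacle here; the only mild subtlety is bookkeeping the dependence of the constant in $|n|\le C'd^2X$ purely on $a_0,a_2$ (not on $\theta$ or anything growing), so that the choice of $d$ is legitimate and uniform. One should double-check that $g_0$ indeed involves only $a_0$ and $a_2$ — inspecting its definition, $g_0(\mathbf{u},\mathbf{v})=(u_0^2-2a_0u_1u_2+a_0a_2u_2^2)+(v_0^2-2a_0v_1v_2+a_0a_2v_2^2)$, which confirms this — so the bound $|n|\le (2+4|a_0|+2|a_0a_2|)d^2X$ suffices, and taking $d\le (2+4|a_0|+2|a_0a_2|)^{-1/2}$ completes the argument.
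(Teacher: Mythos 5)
Your proof is correct, and it reaches the same basic conclusion ($|n|\ll d^2X$, then choose $d$ small) by a slightly different mechanism than the paper. The paper evaluates the identity $\omega_1^2+\omega_2^2=n-\theta$ at the real embedding: since $M_\theta(\omega_i)\le dX^{1/2}$, each $\omega_i$ has real absolute value at most $dX^{1/2}(1+|\theta|+|\theta|^2)$, so the triangle inequality gives $|n|\le 2d^2(1+|\theta|+|\theta|^2)^2X+|\theta|$, which is below $X$ once $X$ is large and $2(1+|\theta|+|\theta|^2)^2d^2<1$. You instead extract $n$ algebraically as the constant coefficient, $n=g_0(\mathbf{u},\mathbf{v})$, using Lemma \ref{Expansionubfvbfsum2squares} and the uniqueness of the representation in the basis $1,\theta,\theta^2$, which yields $|n|\le(2+4|a_0|+2|a_0a_2|)d^2X$. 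Your route has two small advantages: the constant is expressed directly in terms of the polynomial coefficients $a_0,a_2$ rather than $|\theta|$, and there is no additive $|\theta|$ term, so the inclusion holds for all $X\ge 1$ rather than only for sufficiently large $X$ (the paper only needs the weaker statement anyway). The side remark about fixing $c$ with $F(c)<d$ afterwards matches how the paper uses the lemma and is harmless here.
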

\begin{proof}
Let $d>0$ be sufficiently small such that $2(1+|\theta|+|\theta|^2)^2 d^2 < 1$. Observe that if
\begin{align*}
((u_{0}+u_{1}\theta+u_{2}\theta^2), (v_{0}+v_{1}\theta+v_{2}\theta^2)) \in \mathcal{R}_{\theta,1}(d X^{1/2},\infty),
\end{align*}
then 
\begin{align*}
(u_{0}+u_{1}\theta+u_{2}\theta^2)^2 + (v_{0}+v_{1}\theta+v_{2}\theta^2)^2 = n-\theta
\end{align*}
for some integer $n$. Using the triangle inequality, we find that
\begin{align*}
|n| \leq 2d^2(1+|\theta|+|\theta|^2)^2 X + |\theta|,
\end{align*}
which is less than $X$ when $X$ is sufficiently large. Hence for large values of $X$ we have
\begin{align*}
\mathcal{R}_{\theta,1}(d X^{1/2},\infty) \subseteq  \mathcal{R}_{\theta,1}(d X^{1/2},X),
\end{align*}
and the reverse inclusion follows trivially.
\end{proof}
As in Lemma \ref{Rtheta1dX1/2inftyinclusionlem}, choose $d>0$ and a positive real number $N_{d}$ such that for $X \geq N_{d}$ we have
\begin{align*}
\mathcal{R}_{\theta,1}(d X^{1/2},\infty) =  \mathcal{R}_{\theta,1}(d X^{1/2},X).
\end{align*}
As in Lemma \ref{betaalphaauxlemmasum2squares}, choose $c>0$ such that $c < d$ and $F(c) < d$. Observe that if $(\alpha,\beta) \neq (\alpha',\beta')$ then the sets $\mathbf{S}_{c}(\alpha',\beta',X)$ and $\mathbf{S}_{c}(\alpha,\beta,X)$ are disjoint. Hence with $\varepsilon > 0$, when $X$ is sufficiently large, with Lemma \ref{betaalphaauxlemmasum2squares} and Lemma \ref{LowerboundScalphabetaX} we have the estimate
\begin{align*}
\# \mathcal{R}_{\theta,1}(d X^{1/2},X) &\geq \sum_{\substack{2 \leq \alpha \leq (c/4)X^{1/6} \\ \alpha \equiv 0 \pmod 2}}\left(\sum_{\substack{1\leq \beta \leq \alpha \\ \gcd(\alpha,\beta) = 1}}\#\mathbf{S}_{c}(\alpha,\beta,X)\right)\\
&\geq \sum_{\substack{2 \leq \alpha \leq (c/4)X^{1/6} \\ \alpha \equiv 0 \pmod 2}}\left(\sum_{\substack{1\leq \beta \leq \alpha \\ \gcd(\alpha,\beta) = 1}}\frac{c\cdot X^{1/6}}{4\alpha}\right)\\
&\gg \sum_{\substack{2 \leq \alpha \leq (c/4)X^{1/6} \\ \alpha \equiv 0 \pmod 2}}X^{1/6-\varepsilon}\\
&\gg X^{1/3-\varepsilon}.
\end{align*}
Thus, we prove that for sufficiently large $X$ we have
\begin{align*}
\#\mathcal{R}_{\theta,1}(X,X) \geq \#\mathcal{R}_{\theta,1}(dX^{1/2},X)\gg X^{1/3-o(1)},
\end{align*}
which proves Lemma \ref{Rtheta1example}.
\section{Proof of Lemma \ref{Rtheta2example}}
\label{Substitutioninspirationcubicpol}
We carry over useful formulas and lemmas from the previous section. We recall
\begin{align*}
g_{0}(\mathbf{u},\mathbf{v}) = &(u_{0}^2-2a_{0}u_{1}u_{2}+u_{2}^2a_{0}a_{2})+\\
&+(v_{0}^2-2a_{0}v_{1}v_{2}+v_{2}^2a_{0}a_{2}),
\end{align*}
\begin{align*}
g_{1}(\mathbf{u},\mathbf{v}) = &\left(2u_{0}u_{1}-2a_{1}u_{1}u_{2}+u_{2}^2(a_{1}a_{2}-a_{0})\right) +\\
&+\left(2v_{0}v_{1}-2a_{1}v_{1}v_{2}+v_{2}^2(a_{1}a_{2}-a_{0})\right),
\end{align*}
\begin{align*}
g_{2}(\mathbf{u},\mathbf{v}) = &(2u_{0}u_{2}+u_{1}^2 -2a_{2}u_{1}u_{2}+u_{2}^2(a_{2}^2-a_{1}))+\\
&+(2v_{0}v_{2}+v_{1}^2 -2a_{2}v_{1}v_{2}+v_{2}^2(a_{2}^2-a_{1})).
\end{align*}
\begin{align*}
I(u_{1},u_{2},v_{1},v_{2}) = u_{1}v_{2}-v_{1}u_{2}.
\end{align*}
We now set
\begin{align*}
s_{1}(u_{1},u_{2},& v_{1},v_{2}) = \\
&-2+2a_{1}u_{1}u_{2}-u_{2}^2(a_{1}a_{2}-a_{0})+2a_{1}v_{1}v_{2}-v_{2}^2(a_{1}a_{2}-a_{0}),
\end{align*}
\begin{align*}
s_{2}(u_{1},u_{2},& v_{1},v_{2}) = \\
& -u_{1}^2+2a_{2}u_{1}u_{2}-u_{2}^2(a_{2}^2-a_{1}) -v_{1}^2+2a_{2}v_{1}v_{2}-v_{2}^2(a_{2}^2-a_{1}),
\end{align*}
\begin{align*}
L_{0}(u_{1},u_{2},v_{1},v_{2}) = v_{2} s_{1}(u_{1},u_{2},v_{1},v_{2})/2 - v_{1} s_{2}(u_{1},u_{2},v_{1},v_{2})/2
\end{align*}
and
\begin{align*}
M_{0}(u_{1},u_{2},v_{1},v_{2}) = -u_{2} s_{1}(u_{1},u_{2},v_{1},v_{2})/2 +u_{1}s_{2}(u_{1},u_{2},v_{1},v_{2})/2.
\end{align*}
Let $\mathbf{T} \subset \Z[\theta]\times\Z[\theta]$ contain tuples $((u_{0}+u_{1}\theta+u_{2}\theta^2), (v_{0}+v_{1}\theta+v_{2}\theta^2))$ such that 
\begin{enumerate}
\item $v_{2}$ is even, $\gcd(u_{2},v_{2}) = 1$
\item $u_{1}$ and $v_{1}$ are odd, with $I(u_{1},u_{2},v_{1},v_{2}) = 1$
\item $u_{0} = L_{0}(u_{1},u_{2},v_{1},v_{2})$ and $v_{0} = M_{0}(u_{1},u_{2},v_{1},v_{2})$.
\end{enumerate}
We verify that on $\mathbf{T}$ we have $s_{1}(u_{1},u_{2},v_{1},v_{2}) \equiv 0 \mod 4$ and $s_{2}(u_{1},u_{2},v_{1},v_{2}) \equiv 2 \mod 4$. This implies that 
\begin{align}
\label{u0mod2lem1.6}
u_{0}= L_{0}(u_{1},u_{2},v_{1},v_{2}) \equiv 1 \mod 2
\end{align}
 and
\begin{align}
\label{v0mod2lem1.6}
v_{0} = M_{0}(u_{1},u_{2},v_{1},v_{2}) \equiv 1 \mod 2.
\end{align}
\begin{lem}
\label{TsubsetRtheta2inftyinfty}
We have $\mathbf{T} \subset \mathcal{R}_{\theta,2}(\infty,\infty)$.
\end{lem}
\begin{proof}
By Lemma \ref{Expansionubfvbfsum2squares} we have
\begin{align*}
\left((u_{0}+u_{1}\theta+u_{2}\theta^2),(v_{0}+v_{1}\theta+v_{2}\theta^2)\right) \in \mathcal{R}_{\theta,2}(\infty,\infty)
\end{align*}
if and only if all the listed conditions
\begin{itemize}
\item a) $g_{0}(\mathbf{u},\mathbf{v}) \equiv 0 \mod 2$
\item b) $g_{1}(\mathbf{u},\mathbf{v}) = -2$
\item c) $g_{2}(\mathbf{u},\mathbf{v}) = 0$
\end{itemize}
are satisfied. Solving the last two equations, we have
\begin{align*}
2u_{1}u_{0}+2v_{1}v_{0} = -2+2a_{1}u_{1}u_{2}-u_{2}^2(a_{1}a_{2}-a_{0})+2a_{1}v_{1}v_{2}-v_{2}^2(a_{1}a_{2}-a_{0}),
\end{align*}
and
\begin{align*}
2u_{2}u_{0}+2v_{2}v_{0} = -u_{1}^2+2a_{2}u_{1}u_{2}-u_{2}^2(a_{2}^2-a_{1}) -v_{1}^2+2a_{2}v_{1}v_{2}-v_{2}^2(a_{2}^2-a_{1}),
\end{align*}
where the right-hand sides of the above equations are $s_{1}(u_{1},u_{2},v_{1},v_{2})$ and $s_{2}(u_{1},u_{2},v_{1},v_{2})$ respectively. This leads us to the matrix equation
\begin{align*}
\begin{bmatrix}
2u_{1} & 2v_{1}\\
2u_{2} & 2v_{2}
\end{bmatrix}
\times
\begin{bmatrix}
u_{0} \\
v_{0}
\end{bmatrix}
=
\begin{bmatrix}
s_{1}(u_{1},u_{2},v_{1},v_{2})\\
s_{2}(u_{1},u_{2},v_{1},v_{2})
\end{bmatrix}
.
\end{align*}
If $I(u_{1},u_{2},v_{1},v_{2}) \neq 0$, solving the matrix equation gives us
\begin{align*}
\begin{bmatrix}
u_{0}\\
v_{0}
\end{bmatrix}
&= \frac{1}{u_{1}v_{2}-v_{1}u_{2}}
\begin{bmatrix}
v_{2} & -v_{1}\\
-u_{2} & u_{1}
\end{bmatrix}
\times
\begin{bmatrix}
s_{1}(u_{1},u_{2},v_{1},v_{2})/2\\
s_{2}(u_{1},u_{2},v_{1},v_{2})/2
\end{bmatrix}
\\
&= \frac{1}{u_{1}v_{2}-v_{1}u_{2}}
\begin{bmatrix}
v_{2} s_{1}(u_{1},u_{2},v_{1},v_{2})/2 - v_{1} s_{2}(u_{1},u_{2},v_{1},v_{2})/2 \\
-u_{2} s_{1}(u_{1},u_{2},v_{1},v_{2})/2 +u_{1}s_{2}(u_{1},u_{2},v_{1},v_{2})/2
\end{bmatrix}\\
&=\frac{1}{I(u_{1},u_{2},v_{1},v_{2})}
\begin{bmatrix}
L_{0}(u_{1},u_{2},v_{1},v_{2})\\
M_{0}(u_{1},u_{2},v_{1},v_{2}).
\end{bmatrix}
\end{align*}
We thus verify that elements of $\mathbf{T}$ satisfy conditions $b)$ and $c)$. Now, note that for the elements of $\mathbf{T}$ we have $g_{0}(\mathbf{u},\mathbf{v}) \equiv u_{0}^2+v_{0}^2 \mod 2$. By the modular equations \eqref{u0mod2lem1.6} and \eqref{v0mod2lem1.6}, condition $a)$ is verified, and thus the proof is complete.
\end{proof}
The rest of the proof now follows as in the discourse after the proof of Lemma \ref{SsubsetRtheta1inftyinfty}, but with $\mathbf{S}$ replaced with $\mathbf{T}$, $h_{i}$ replaced with $s_{i}$, $U_{0}$ replaced with $L_{0}$, $V_{0}$ replaced with $M_{0}$, $\mathcal{R}_{\theta,1}$ replaced with $\mathcal{R}_{\theta,2}$ and scaling absolute constants.
\section{Further Remarks}
\label{Remarkcubicpolys}
To obtain polynomial substitutions $R(t),S_{1}(t),S_{2}(t)$ described in \eqref{Substitutionforncubedmin2} that satisfy $R(t)^3-2 = S_{1}(t)^2 +S_{2}(t)^2$, we work with the system $\mathbf{T}$, with the added constraint that $v_{2} = 2$ and $u_{2} = 1$. In this subsystem the parameters $u_{0},v_{0},\ldots,u_{2},v_{2}$ are decided by $t$, where $u_{1} = 2t+1$. With these considerations, we let $R(t) = g_{0}(\mathbf{u},\mathbf{v})/2$, and we find $S_{1}(t), S_{2}(t)$ by managing the real and imaginary parts of the expansion
\begin{align*}
\prod_{j=0}^{2}\left(u_{0}+u_{1}2^{1/3}e^{2\pi i j/ 3}+u_{2}2^{2/3}e^{4 \pi i j/ 3} + i(v_{0}+v_{1}2^{1/3}e^{2\pi i j/ 3}+v_{2}2^{2/3}e^{4 \pi i j/ 3})\right).
\end{align*}
Theorem \ref{cubictheorem} concerns itself with $26.5625\%$ of degree-three monic irreducible polynomials with negative discriminant. This limitation arises because we only transferred lower bounds from $\#\mathcal{R}_{\theta,q}(\cdot,\cdot)$  to $\#B_{\mathbf{p}}(\cdot)$, where $q \in\{1,2\}$. It is important to note that sometimes $\mathcal{R}_{\theta,q}(\cdot,\cdot)$ can be empty, for example, if $q = 1$ and $a_{1}a_{2}-a_{0}$ is even. To cover all cases of irreducible monic polynomials of degree-three with negative discriminant, we expect one would need to use sets $\mathcal{R}_{\theta,2^{t}}(\cdot,\cdot)$ for a suitable integer $t$ and employ enhanced parity arguments; however, we do not pursue these additional complications here. Furthermore, our arguments should generalise to study integer points on the surface $y^2+n z^2 = \mathbf{p}(x)$. Here, our theory is stronger when $n \in \mathbb{N}$ and $\mathbf{p}$ is a monic cubic polynomial with a negative discriminant, as our unit arguments can be transferred quite readily. However, when $n$ is negative, most of our theory breaks down (with the exception of an analogue for Lemma \ref{Ringtransferprinclemsum2squares}). This is because our arguments, even after suitable modification, do not provide sufficient information about the density of units in the ring $\mathcal{O}_{\Q(\theta,\sqrt{-n})}$.
\section{Acknowledgements}
The author expresses their gratitude to Igor Shparlinski for pointing out a relevant result by Kronecker: if a monic irreducible polynomial has all its roots on the unit circle, it is a cyclotomic polynomial. The author also thanks an anonymous referee for their careful reading and for identifying a flaw in a previous iteration of the manuscript. This research was generously supported by the Australian Government Research Training Program (RTP) Scholarship and a top-up scholarship from the University of New South Wales.
\section{Data availability statement}
Data sharing is not applicable to this article as no datasets were generated or analysed during the current study.

\end{document}